\newcommand{\sym}[1]{\mathfrak{S}_{#1}}
\newcommand{\parallelogram}{
    \centering
    \begin{tikzpicture}[scale=0.1]
        \draw (0,0) -- (2,0) -- (3,2) -- (1,2) -- (0,0);
    \end{tikzpicture}
}
\newtheorem{theorem}{Theorem}[section]
\newtheorem{proposition}[theorem]{Proposition}
\newtheorem{lemma}[theorem]{Lemma}
\newtheorem{corollary}[theorem]{Corollary}
\theoremstyle{definition}
\newtheorem{remark}[theorem]{Remark}
\newtheorem{example}[theorem]{Example}
\newtheorem{definition}[theorem]{Definition}
\theoremstyle{remark}
\newcommand{\defterm}[1]{\emph{#1}}
\newcommand{\ba}{\mathbf{a}}
\newcommand{\bp}{\mathbf{p}}
\newcommand{\bx}{\mathbf{x}}
\newcommand{\be}{\mathbf{e}}
\newcommand{\bpi}{\boldsymbol{\pi}}
\newcommand{\EVol}{\mathrm{EVol}}
\DeclareMathOperator{\conv}{conv}
\newtheorem*{rep@theorem}{\rep@title}\newcommand{\newreptheorem}[2]{
\newenvironment{rep#1}[1]{
\def\rep@title{\bf #2 \ref{##1}}
\begin{rep@theorem}}
{\end{rep@theorem}}}
\newtheorem*{rep@proposition}{\rep@title}\newcommand{\newrepproposition}[2]{
\newenvironment{rep#1}[1]{
\def\rep@title{\bf #2 \ref{##1}}
\begin{rep@proposition}}
{\end{rep@proposition}}}
\newtheorem*{rep@corollary}
{\rep@title}\newcommand{\newrepcorollary}[3]{
\newenvironment{rep#1}[1]{
\def\rep@title{\bf #2 \ref{##1}}
\begin{rep@corollary}}
{\end{rep@corollary}}}
\definecolor{andresblue}{rgb}{0,0.72,0.92}
\definecolor{andrespink}{rgb}{1,0,1}
\begin{document}

\title{On the Geometry of Stack-Sorting Simplices}

\author{Cameron Ake}
\address{\scriptsize{Department of Mathematics, Harvey Mudd College}}
\email{\scriptsize{cake@g.hmc.edu}}

\author{Spencer F. Lewis}
\address{\scriptsize{Department of Mathematics, Harvey Mudd College}}
\email{\scriptsize{splewis@g.hmc.edu}}

\author{Amanda Louie}
\address{\scriptsize{Department of Mathematics, Harvey Mudd College}}
\email{\scriptsize{amlouie@g.hmc.edu}}

\author{Andr\'es R. Vindas-Mel\'endez}
\address{\scriptsize{Department of Mathematics, Harvey Mudd College}, \url{https://math.hmc.edu/arvm/}}
\email{\scriptsize{avindasmelendez@g.hmc.edu}}

\begin{abstract}
We show that all stack-sorting polytopes are simplices.
Furthermore, we show that the stack-sorting polytopes generated from $Ln1$ permutations have relative volume 1.
We establish an upper bound for the number of lattice points in a stack-sorting polytope.
In particular, stack-sorting polytopes generated from $2Ln1$ permutations have no interior points.
\end{abstract}

\maketitle

\section{Introduction}
A stack is a data structure that only allows the most recently input element to be output. 
Donald Knuth first popularized the idea of sorting a permutation by storing its values on a stack \cite{knuth68}.
Twenty-two years later, Julian West introduced a deterministic approach: the stack-sorting algorithm \cite{west90}.
One pass of the stack-sorting algorithm has time complexity O(n), but the algorithm is not guaranteed to fully sort a permutation in ascending order.
So, after we pass a permutation through the algorithm, we pass the sorted result through the algorithm again.
If we repeat this, the permutation will be sorted in ascending order after at most $n - 1$ iterations of the algorithm.
We will discuss the stack-sorting algorithm in depth in Section \ref{sec:background}.

In \cite{lee2025stacksortingsimplicesgeometrylatticepoint}, Lee, Mitchell, and Vindas-Mel\'endez analyzed the geometry of \textit{stack-sorting polytopes}.
To generate a stack-sorting polytope from a permutation of numbers $1$ through $n$, we repeatedly pass the permutation through the stack-sorting algorithm.
Then, we consider these input and output permutations as a set of points.
The convex hull of these points will be a stack-sorting polytope, a subset of the permutahedron in $\mathbb{R}^n$.

In this paper, we continue the study of stack-sorting polytopes.
We provide some computational results on the stack-sorting algorithm.
Then, we focus on the geometry and volumes of stack-sorting polytopes.
We also discuss the interior lattice points of stack-sorting polytopes.
We conclude by utilizing triangulations to study the lattice points of these polytopes.

In Section \ref{sec:background}, we give an overview of the stack-sorting algorithm, convex lattice polytopes, and stack-sorting polytopes.
In Section \ref{sec:simplices}, we prove that all stack-sorting polytopes are simplices, which we state in Theorem \ref{thm:all-simplices}.

\begin{reptheorem}{thm:all-simplices}
Let $\bpi \in \mathfrak{S}_n$ be exactly $k$-stack-sortable.
Then, $\triangle := conv(\mathcal{S}^{\bpi})$ is a $k$-simplex.
\end{reptheorem}

In Section \ref{sec:volume-of-polytopes}, we give the volume of all $Ln1$ simplices.

\begin{reptheorem}{thm:relvol-1}
    Let $\bpi$ be an $Ln1$ permutation.
    The relative volume of $\triangle = \conv(\mathcal{S}^{\bpi})$ is $1$.
\end{reptheorem}

In Section \ref{sec:lattice}, we study the lattice points of stack-sorting polytopes.
We give an upper bound for the number of lattice points in a stack-sorting polytope generated by an $Ln1$ permutation.
We briefly discuss triangulations of stack-sorting polytopes.
We also show that stack-sorting polytopes generated by $2Ln1$ permutations are hollow in Proposition \ref{prop:2Ln1-hollow}.

\begin{repproposition}{prop:2Ln1-hollow}
Let $\bpi \in \sym{n}$ be of the form $2Ln1$, where $L$ is any permutation of $\{3, 4, \dots, (n-1)\}$.
The simplex $\triangle := conv(\mathcal{S}^{\bpi})$ is hollow.
In particular, any non-vertex lattice point of $\triangle$ lies on the facet formed by the convex hull of $\mathcal{S}^{\bpi} \setminus \{\mathbf{e}\}$.
\end{repproposition}

In Section \ref{sec:questions}, we conclude by giving questions, conjectures, and areas of future research.

\section{Background and Preliminaries} \label{sec:background}

In this section, we provide  some necessary background for studying stack-sorting polytopes.
We begin with the stack-sorting algorithm.
This algorithm has been widely studied in combinatorics, computer science, probability, and polyehdral geometry; for example, see \cite{Bona, Branden, Defant1, Defant2, DefantEngenMiller, Defant3, Defant4, lee2025stacksortingsimplicesgeometrylatticepoint, MasudaThomasTonksVallete}.

\subsection{Stack-sorting algorithm} \label{subsec:stack-sort-background} \text{}

The stack-sorting algorithm takes a permutation $\bpi \in \mathfrak{S}_n$ of the numbers 1 through $n$ as input.
The purpose of the algorithm is to sort these numbers in increasing order.
We denote the identity permutation by $\ve = 123\cdots n$.

Before we discuss the stack-sorting algorithm, we must define a stack.
A stack is commonly referred to as a ``last in, first out" data structure.
A stack outputs its values in the opposite order to which they are input.

We refer to inputting a value into a stack as a \textit{push}, and we refer to outputting a value from the stack as a \textit{pop}.
We call the most-recently-input element of the stack the \textit{top} element, $t$.

Now, we define the stack-sorting algorithm.

\begin{definition} \label{def:stack-sorting-alg}
The \defterm{stack-sorting algorithm} on a permutation $\bpi$ is defined in West's thesis \cite{west90}.
We start with an empty stack and a permutation $\bpi = \pi_1 \pi_2 \cdots \pi_n$, and we iterate through the elements of $\bpi$, from left to right.
For each element $\pi_k$:
\begin{itemize}

    \item We compare $\pi_k$ to the value at the top of the stack, $t$.
    
    \item While $\pi_k$ is greater than $t$, we pop $t$ from the stack into our output.
    We repeat this step until $\pi_k$ is less than $t$, or until the stack is empty.

    \item Once $\pi_k$ is less than $t$, or once the stack is empty, we push $\pi_k$ onto the stack and repeat the process with $\pi_{k + 1}$.
    
\end{itemize}
Finally, once we have iterated through all elements of $\bpi$, we pop the remaining values on the stack.
We call the resulting permutation $s(\bpi)$.

It is important to note that all elements of $\bpi$ will be pushed onto the stack and popped into the output. 
Also note that the stack is always strictly decreasing.
\end{definition}

We illustrate this process with an example.

\begin{example} \label{ex:231_stacksort}
We pass $\bpi = 231$ through the stack-sorting algorithm. 
We begin by pushing the first element of $\bpi$, $2$, onto the stack.

\begin{figure}[!ht]
    \centering
    \begin{tikzpicture}[scale = .39]
        \node at (-1, 0.5) {$(i)$};
        \draw[thick] (0,0)--(3,0)--(3,-3)--(4,-3)--(4,0)--(7,0);
        \draw (4,0) rectangle node {$2$} (5,1);
        \draw (5,0) rectangle node {$3$} (6,1);
        \draw (6,0) rectangle node {$1$} (7,1);
        \draw[thick, ->] (4,0.5) arc (90:180:0.5cm) -- (3.5, -.5);
    \end{tikzpicture}
    \qquad
    \begin{tikzpicture}[scale=0.39]
        \draw[thick] (0,0)--(3,0)--(3,-3)--(4,-3)--(4,0)--(7,0);
        \draw (3,-3) rectangle node {$2$} (4,-2);
        \draw (5,0) rectangle node {$3$} (6,1);
        \draw (6,0) rectangle node {$1$} (7,1);
        \end{tikzpicture}
\end{figure}

Now, we consider the next element of $\bpi$, $3$.
We see $3$ is greater than the top element of the stack, $2$. So, we pop $2$.

\begin{figure}[!ht]
    \centering
    \begin{tikzpicture}[scale=0.39]
        \node at (-1, 0.5) {$(ii)$};
        \draw[thick] (0,0)--(3,0)--(3,-3)--(4,-3)--(4,0)--(7,0);
        \draw (3,-3) rectangle node {$2$} (4,-2);
        \draw (5,0) rectangle node {$3$} (6,1);
        \draw (6,0) rectangle node {$1$} (7,1);
        \draw[thick, ->] (3.5,-2) -- (3.5, 0) arc (0:90:.5cm) -- (2.5, .5);
    \end{tikzpicture}
    \qquad
    \begin{tikzpicture}[scale=0.39]
        \draw[thick] (0,0)--(3,0)--(3,-3)--(4,-3)--(4,0)--(7,0);
        \draw (0,0) rectangle node {$2$} (1,1);
        \draw (5,0) rectangle node {$3$} (6,1);
        \draw (6,0) rectangle node {$1$} (7,1);
        \end{tikzpicture}
\end{figure}

The stack is empty, so we push $3$.

\begin{figure}[!ht]
    \centering
    \begin{tikzpicture}[scale = .39]
    \node at (-1, 0.5) {$(iii)$};
    \draw[thick] (0, 0) -- (3, 0) -- (3, -3) -- (4, -3) -- (4, 0) -- (7, 0);
    \draw (0, 0) rectangle node {$2$} (1, 1);
    \draw (5, 0) rectangle node {$3$} (6, 1);
    \draw (6, 0) rectangle node {$1$} (7, 1);
    \draw[thick, ->] (5, 0.5) -- (4, 0.5) arc (90:180:.5cm) -- (3.5, -.5);
    \end{tikzpicture}
    \qquad
    \begin{tikzpicture}[scale = .39]
    \draw[thick] (0, 0) -- (3, 0) -- (3, -3) -- (4, -3) -- (4, 0) -- (7, 0);
    \draw (0, 0) rectangle node {$2$} (1, 1);
    \draw (3, -3) rectangle node {$3$} (4, -2);
    \draw (6, 0) rectangle node {$1$} (7, 1);
    \end{tikzpicture}
\end{figure}

Now, we consider the next (and final) element of $\bpi$, $1$.
We see $1$ is less than the top element of the stack, $3$. So, we push $1$.

\begin{figure}[!ht]
    \centering
    \begin{tikzpicture}[scale = .39]
    \node at (-1, 0.5) {$(iv)$};
    \draw[thick] (0, 0) -- (3, 0) -- (3, -3) -- (4, -3) -- (4, 0) -- (7, 0);
    \draw (0, 0) rectangle node {$2$} (1, 1);
    \draw (3, -3) rectangle node {$3$} (4, -2);
    \draw (6, 0) rectangle node {$1$} (7, 1);
    \draw[thick, ->] (6, 0.5) -- (4, 0.5) arc (90:180:.5cm) -- (3.5, -.5);
    \end{tikzpicture}
    \qquad
    \begin{tikzpicture}[scale = .39]
    \draw[thick] (0, 0) -- (3, 0) -- (3, -3) -- (4, -3) -- (4, 0) -- (7, 0);
    \draw (0, 0) rectangle node {$2$} (1, 1);
    \draw (3, -3) rectangle node {$3$} (4, -2);
    \draw (3, -2) rectangle node {$1$} (4, -1);
    \end{tikzpicture}
\end{figure}

Now that we have iterated through the permutation, we pop the values that are in the stack.

\begin{figure}[!ht]
    \centering
    \begin{tikzpicture}[scale = .39]
    \node at (-1, 0.5) {$(v)$};
    \draw[thick] (0, 0) -- (3, 0) -- (3, -3) -- (4, -3) -- (4, 0) -- (7, 0);
    \draw (0, 0) rectangle node {$2$} (1, 1);
    \draw (3, -3) rectangle node {$3$} (4, -2);
    \draw (3, -2) rectangle node {$1$} (4, -1);
    \draw[thick, ->] (3.5, -1) -- (3.5, 0) arc (0:90:.5cm) -- (2.5, 0.5);
    \end{tikzpicture}
    \qquad
    \begin{tikzpicture}[scale = .38]
    \draw[thick] (0, 0) -- (3, 0) -- (3, -3) -- (4, -3) -- (4, 0) -- (7, 0);
    \draw (0, 0) rectangle node {$2$} (1, 1);
    \draw (3, -3) rectangle node {$3$} (4, -2);
    \draw (1, 0) rectangle node {$1$} (2, 1);
    \end{tikzpicture}
\end{figure}
\begin{figure}[!ht]
    \centering
    \begin{tikzpicture}[scale = .39]
    \node at (-1, 0.5) {$(vi)$};
    \draw[thick] (0, 0) -- (3, 0) -- (3, -3) -- (4, -3) -- (4, 0) -- (7, 0);
    \draw (0, 0) rectangle node {$2$} (1, 1);
    \draw (1, 0) rectangle node {$1$} (2, 1);
    \draw (3, -3) rectangle node {$3$} (4, -2);
    \draw[thick, ->] (3.5, -2) -- (3.5, 0) arc (0:90:.5cm) -- (2.5, 0.5);
    \end{tikzpicture}
    \qquad
    \begin{tikzpicture}[scale = .39]
    \draw[thick] (0, 0) -- (3, 0) -- (3, -3) -- (4, -3) -- (4, 0) -- (7, 0);
    \draw (0, 0) rectangle node {$2$} (1, 1);
    \draw (1, 0) rectangle node {$1$} (2, 1);
    \draw (2, 0) rectangle node {$3$} (3, 1);
    \end{tikzpicture}
\end{figure}

This gives us $s(\bpi) = 213$.

\end{example}

As we see in Example \ref{ex:231_stacksort}, the stack-sorting algorithm does not always fully sort a permutation in ascending order.
We can pass $s(\bpi)$ through the algorithm again, yielding $s(s(\bpi))$, or $s^2(\bpi)$.
We continue applying the algorithm until our permutation is fully sorted.
A permutation of length $n$ will be sorted in ascending order after at most $n - 1$ iterations of the stack-sorting algorithm.

\begin{definition} \label{def:k-stack-sortable}
Given a permutation $\bpi \in \mathfrak{S}_n$, if $s^k(\bpi) = \mathbf{e}= 123\cdots n$, then $\bpi$ is \defterm{k-stack-sortable}.
If this is the smallest value $k$ for which $\bpi$ is $k$-stack-sortable, then $\bpi$ is \defterm{exactly $k$-stack-sortable.}
\end{definition}

Let $\mathcal{S}^{\bpi}$ be the set, including $\bpi$, of permutations resulting from subsequent passes of $\bpi$ through the stack-sorting algorithm.
That is, $\mathcal{S}^{\bpi} = \{\bpi, s(\bpi), s^2(\bpi), \dots, \ve\}$.
We illustrate $\mathcal{S}^{\bpi}$ with an example.
\begin{example} \label{ex:spi}
    Let $\bpi = 3241$.
    Then, $s(\bpi) = 2314$, $s^2(\bpi) = 2134$, and $s^3(\bpi) = 1234$.
    So, $\mathcal{S}^{\bpi} = \{3241, 2314, 2134, 1234\}$.
\end{example}
We study $\mathcal{S}^{\bpi}$ further when we discuss stack-sorting polytopes.

\subsection{Permutations ending in ascending or descending subsequences} \label{subsec:ending-sub-seq} \text{}

We now present results on the stack-sorting algorithm.
We start by exploring permutations ending in an ascending sequence.
We denote these permutations as $\bpi = AB$, where $A$ and $B$ are subsequences of $\bpi$ and $B$ is in ascending order.

\begin{proposition} \label{prop:ascent-ending}
Let $\bpi=AB\in \sym{n}$, where $B$ is some ascending sequence.
Then, $\bpi$ is $|A|$-stack-sortable.

\end{proposition}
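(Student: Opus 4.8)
The plan is to reduce the statement to a one-step ``suffix-growth'' lemma and then induct. For $\sigma\in\sym{n}$ let $\ell(\sigma)$ denote the length of the longest ascending suffix of $\sigma$, so that $\ell(\sigma)=n$ exactly when $\sigma=\ve$. Writing $\bpi=AB$ with $B$ ascending gives $\ell(\bpi)\ge |B|$, hence $n-\ell(\bpi)\le |A|$; since being $k$-stack-sortable implies being $k'$-stack-sortable for every $k'\ge k$, it suffices to prove that $\bpi$ is $\big(n-\ell(\bpi)\big)$-stack-sortable. I would obtain this from the key claim that a single pass strictly lengthens the longest ascending suffix of any non-identity permutation: if $\sigma\neq\ve$, then $\ell\big(s(\sigma)\big)\ge \ell(\sigma)+1$. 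Granting this, a short induction on $d(\sigma):=n-\ell(\sigma)$ finishes the proof: the base case $d=0$ is $\sigma=\ve$, and for $d\ge 1$ the claim gives $d\big(s(\sigma)\big)\le d(\sigma)-1$, so by induction $s(\sigma)$ is $(d-1)$-stack-sortable and therefore $\sigma$ is $d$-stack-sortable.

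The heart of the argument is a statement about the mechanics of the algorithm, which I would isolate as a helper claim: if at some moment the stack is (as always) strictly decreasing from bottom to top and the portion of the input not yet read is strictly increasing, then every value output from that moment until termination comes out in increasing order. I would prove this by induction on the total number of elements still in play (on the stack plus unread), always tracking the global minimum $x$ of these elements. Because the stack decreases upward its minimum is its top, and because the remaining input increases its minimum is its first unread entry; comparing these two shows that the very next value emitted is exactly $x$. Removing $x$ leaves a configuration of the same shape (a decreasing stack feeding on an increasing input) with one fewer element, to which induction applies, and since $x$ is smaller than everything emitted afterward the whole output stream is increasing. The empty-input base case just pops the decreasing stack bottom-to-top, which is increasing.

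To apply this, write $B=b_1<\cdots<b_m$ for the \emph{maximal} ascending suffix of $\bpi$ (so $m=\ell(\bpi)$) and consider the instant the algorithm first reads $b_1$. At that instant the unread input is precisely $b_1<\cdots<b_m$, and the stack is nonempty: having just processed the last letter $a_p$ of $A$, the algorithm pushed $a_p$, so $a_p$ sits on top. The helper claim then forces every subsequent output to be increasing, and all $r+m\ge m+1$ elements currently in play (where $r\ge 1$ is the stack size) are output before termination; being the last symbols emitted, they form an ascending suffix of $s(\bpi)$ of length at least $m+1$. This is exactly $\ell\big(s(\bpi)\big)\ge \ell(\bpi)+1$, completing the key claim. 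I expect the main obstacle to be the helper claim itself --- pinning down that a decreasing stack reading an increasing input emits an increasing stream --- together with the bookkeeping needed to guarantee that these emitted values really occupy a contiguous suffix and that the stack is nonempty, which is what supplies the crucial extra $+1$.
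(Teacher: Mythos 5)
Your proof is correct and follows essentially the same route as the paper's: both arguments hinge on the observation that once the algorithm reaches the first element of the ascending tail, the nonempty strictly decreasing stack together with the strictly increasing unread input forces all remaining elements to be emitted as an increasing suffix of the output, so each pass strictly lengthens the ascending tail, and an induction finishes. Your version merely packages this more explicitly (an isolated helper lemma and induction on $n-\ell(\bpi)$ rather than on $|A|$), which is a difference of presentation rather than of substance.
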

\begin{proof}
    We proceed by strong induction on $|A|$.
    
    For the $|A| = 0$ case, $\bpi = \ve$, so $s^0(\bpi) = \bpi = \ve$.
 
    Now, assume that $s^{|A|}(\bpi) = \ve$ for $|A| < k$.
    We show that $s^{|A|}(\bpi) = \ve$ for $|A| = k$.

    Consider the stack-sorting algorithm on $\bpi$.
    First, the algorithm iterates through the values in $A$.
    When the algorithm considers the first element of $B$, there will be at least one value of $A$ on the stack.

    The stack is strictly decreasing, so the smallest value on the stack is its top value, $t$.
    Also, $B=b_1 b_2 \cdots b_r$ is strictly increasing, so the smallest value of $B$ is its first value, $b_1$.

    Now, consider two cases.
    If $b_1 > t$, then $t$ will be the next value popped.
    Otherwise, $b_1 < t$, and $b_1$ will be pushed.
    Then, the algorithm will consider $b_2$, which is greater than $b_1$.
    So, $b_1$ will be the next value popped.
    Either way, the next value popped is smallest of the remaining elements.

    Now, we again have a strictly decreasing stack and a strictly increasing sequence.
    So, the next value popped will be the next smallest, and so forth.
    In this manner, all of the remaining elements will be popped in ascending order, and $s(\bpi)$ ends in a strictly longer increasing sequence than $\bpi$.
    That is, $s(\bpi)$ can be written as $AB$, where $B$ is increasing and $|A| < k$.

    By the inductive hypothesis, $s(\bpi)$ is $(k - 1)$-stack-sortable.
    So, $\bpi$ is $k$-stack-sortable.
\end{proof}

We illustrate Proposition \ref{prop:ascent-ending} with an example.

\begin{example} \label{ex:2451367}
The permutation $\bpi = 2451367$ can be broken into $A = 245$ and $B = 1367$, where $|A| = 3$.
We have $s(\bpi) = 2413567$. 
Note how the value $5$ is inserted into the sequence $B = 1367$.
Next, we have $s^2(\bpi) = 2134567$. Here, $4$ is inserted into the sequence $13567$.
Finally, we have $s^3(\bpi) = 1234567$.
So, $\bpi = 2451367$ is $3$-stack-sortable.

\end{example}

\begin{remark} \label{rem:ascent-iff}
    Note that by a similar proof to Proposition \ref{prop:ascent-ending}, we have that $\bpi$ is exactly $|A|$-stack-sortable if and only if $A$ ends with its largest value, and $B$ begins with $1$.
\end{remark}

Next, we explore permutations ending in a descending sequence.
Again, we denote these permutations as $\bpi = AB$.
Now, $B$ is in descending order.

\begin{corollary} \label{cor:descent-ending}
Let $\bpi = AB \in \sym{n}$, where $B$ is a descending sequence.
Then, $\bpi$ is $(|A| + 1)$-stack-sortable.
\end{corollary}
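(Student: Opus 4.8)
The plan is to reduce this to Proposition \ref{prop:ascent-ending} by showing that a single pass of the stack-sorting algorithm converts the descending suffix into an ascending one without lengthening the prefix. Write $B = b_1 b_2 \cdots b_r$ with $b_1 > b_2 > \cdots > b_r$, so that $b_1 = \max B$. First I would track the stack as the algorithm reaches the first element $b_1$ of $B$. At that moment every entry of the stack is an element of $A$, and since the stack is strictly decreasing, the pops triggered by $b_1$ remove exactly the entries smaller than $b_1$; thus the $A$-elements that survive on the stack are precisely those exceeding $b_1$. Because $b_1$ is the largest element of $B$, every surviving $A$-element is larger than every element of $B$.

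Next I would argue that no further pops occur until the terminal flush. Each subsequent $b_{i+1}$ is compared against the current top $b_i$, and $b_{i+1} < b_i$ forces an immediate push; hence $b_1, \dots, b_r$ accumulate on top of the surviving $A$-elements, with $b_r$ on top. When the algorithm empties the stack at the end, it outputs $b_r, b_{r-1}, \dots, b_1$ (an increasing run, since $B$ was decreasing) and then the surviving $A$-elements from top to bottom, i.e.\ in increasing order. As every surviving $A$-element exceeds $b_1$ and each of the two runs is increasing, their concatenation is a single increasing run. Consequently $s(\bpi) = A'B'$, where $B'$ is this increasing suffix and $A'$ consists of the elements flushed before it, so that $|A'| \le |A|$.

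Applying Proposition \ref{prop:ascent-ending} to $s(\bpi) = A'B'$ shows that $s(\bpi)$ is $|A'|$-stack-sortable, hence $|A|$-stack-sortable. Therefore $\bpi$ is $(|A| + 1)$-stack-sortable, as claimed.

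I expect the main obstacle to be the bookkeeping in the second paragraph: one must pin down exactly which elements of $A$ remain on the stack beneath the $B$-block and confirm that, because they all exceed $b_1 = \max B$, splicing them after the reversed $B$ during the final flush still produces one monotone increasing run rather than breaking the ascending tail. This ordering fact is the crux; once it is established, the reduction to the ascending case of Proposition \ref{prop:ascent-ending} is immediate, and the inequality $|A'| \le |A|$ (which is all that is needed for the stated upper bound) follows automatically.
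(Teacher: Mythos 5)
Your proof is correct and takes essentially the same route as the paper's: one pass of the algorithm leaves the descending block $B$ stacked on top of the surviving large elements of $A$, so the terminal flush produces an ascending suffix $B'$ with $|A'|\le|A|$, and Proposition \ref{prop:ascent-ending} finishes the argument. You justify the key ordering claim (that every surviving $A$-element exceeds $b_1=\max B$, so the flushed output is a single increasing run) more explicitly than the paper does, but the structure is identical.
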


\begin{proof}
    Consider the stack-sorting algorithm on $\bpi$.
    First, the algorithm iterates through the values in $A$.
    When the algorithm reaches the first element of $B$, $b_1$, this element will at some point be pushed onto the stack.
    Then, the other values will be pushed onto the stack on top of $b_1$, since $B$ is strictly decreasing.
    So, the output will end in the values of $B$ (and possibly some values of $A$) in increasing order.

    Now, after one iteration of the stack-sorting algorithm, $s(\bpi)$ can be written as $A'B'$, where $B'$ is some ascending sequence and $|A'| \leq |A|$.
    So, by Proposition \ref{prop:ascent-ending}, $s(\bpi)$ is $|A'|$-stack-sortable.
    So, $\bpi$ is $(|A| + 1)$-stack-sortable.
\end{proof}

We illustrate Corollary \ref{cor:descent-ending} with an example.

\begin{example} \label{ex:7541632}
The permutation $\bpi = 7541632$ can be decomposed into $A = 7541$ and $B = 632$, with $|A| = 4.$
We have $s(\bpi) = 1452367$, $s^2(\bpi) = 1423567$, and $s^3(\bpi) = 1234567$. 
Hence, $\bpi$ is 3-stack-sortable.
\end{example}

\begin{remark} \label{rem:descent-iff}
    Note that by a similar proof to Corollary \ref{cor:descent-ending}, and using Remark \ref{rem:ascent-iff}, $\bpi$ is exactly $(|A| + 1)$-stack-sortable if and only if $B$ starts with $n$ and ends with $1$.
\end{remark}

We will discuss stack-sorting algorithm and stack-sorting polytopes further in Subsections \ref{subsec:stack-sort-polytope-background} and \ref{subsec:Ln1}.

\subsection{Convex lattice polytopes} \label{Ehr-background} \text{}

This subsection provides background on convex lattice polytopes and their discrete geometry.
For more information on polytopes, see \cite{zieglerlecturesonpolytopes}.

The \defterm{convex hull} of a set of points $S$, $\conv(S)$, is the smallest convex set containing $S$.
A \defterm{convex polytope} $\mathcal{P}$ is the convex hull of a finite set $S = \{ \bx_1, \ldots, \bx_k \}$:
\begin{equation*}\label{eqn:conv-hull} 
\mathcal{P} =\conv(S) := \left\lbrace \sum_{i=1}^k \lambda_i \bx_i: \lambda_i \in \R_{\geq 0} \text{ where } \sum_{i=1}^k \lambda_i = 1 \right\rbrace.
\end{equation*}
\begin{figure}[ht]\label{ex:conv-hull}
    \begin{tikzpicture}
        [scale=1.5,
        vertex/.style={inner sep=1pt,circle,draw=andrespink,fill=andrespink,thick}]
        \draw[thick, ->] (-0.5,0) -- (1.5,0) node[anchor = north west] {};
        \draw[thick, ->] (0,-0.25) -- (0,1.25) node[anchor = south east] {};
        \coordinate (A) at (0, 0);
        \coordinate (B) at (1, 0);
        \coordinate (C) at (0, 1);

        \draw[black,top color=andresblue, bottom color=andresblue, fill opacity=0.3]  (C) -- (A) -- (B) -- cycle;

        \node[vertex] at (0, 0){};
        \node[vertex] at (1, 0){};
        \node[vertex] at (0, 1){};

        \node[anchor=north east] at (0, 0){\color{blue}$(0, 0)$};
        \node[anchor=north] at (1, 0){\color{blue}$(1, 0)$};
        \node[anchor=east] at (0, 1){\color{blue}$(0, 1)$};
    
    \end{tikzpicture}
    \caption{The convex hull of the set $\{(0, 0), (1, 0), (0, 1)\}$.}
\end{figure}
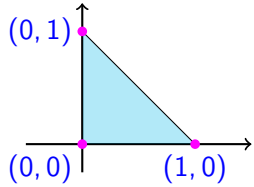

A \defterm{convex lattice polytope} is a convex polytope whose vertices are lattice points.
For the rest of the paper, all polytopes discussed will be convex lattice polytopes, unless otherwise specified.

The \defterm{affine span} of a polytope $\mathcal{P}$, $\mathrm{span}(\mathcal{P})$, is the smallest affine space that contains $\mathcal{P}$:
\[\mathrm{span}(\mathcal{P}) = \{\vx + \lambda(\vy - \vx) : \vx, \vy \in \mathcal{P}, \lambda \in \mathbb{R}\}.\]

The \defterm{dimension} of a polytope $\mathcal{P}$ is the dimension of $span(\mathcal{P})$.

We call a polytope with dimension $d$ a $d$-polytope.
If a polytope in $\mathbb{R}^n$ has dimension $n$, we refer to it as \defterm{full-dimensional}.
A $d$-polytope $\mathcal{P}$ must contain at least $d+1$ vertices.
If $\mathcal{P}$ contains exactly $d + 1$ vertices, we refer to $\mathcal{P}$ as a \defterm{simplex}.
For instance, the $2$-simplex is a triangle.

Now, we describe the volumes of polytopes.
We denote the Euclidean volume of polytope $\mathcal{P}$ as $\EVol(\mathcal{P})$.

\begin{definition} \label{def:vol}
    The \defterm{relative volume} of a polytope $\mathcal{P}$, $\vol(\mathcal{P})$, is the Euclidean volume of $\mathcal{P}$, normalized to $\mathrm{span}(\mathcal{P})$.
\end{definition}
We can compute the relative volume of $\mathcal{P}$ by computing its Euclidean volume, then dividing by the Euclidean volume of a fundamental parallelepiped.
A \defterm{fundamental parallelepiped} of an affine space is a parallelepiped whose generating vectors span the integer points of the space.
All fundamental parallelepipeds have the same volume.
For further explanation of the fundamental parallelepiped, see \cite{relvolreference}, which refers to the fundamental parallelepiped as the primitive parallelotope.

We use this discrete geometry throughout the paper to study a specific class of polytopes, which we introduce in Subsection \ref{subsec:stack-sort-polytope-background}.

\subsection{Stack-sorting polytopes} \label{subsec:stack-sort-polytope-background} \text{}

We now introduce the focus of our research, stack-sorting polytopes.
For the rest of this paper, we will make no distinction between permutations of $\mathfrak{S}_n$ and the corresponding points in $\mathbb{R}^n$.

\begin{definition} \label{def:stack-sorting-polytope}
    Given a permutation $\bpi,$ we define the \defterm{stack-sorting polytope} of $\bpi$ as the convex hull of $\mathcal{S}^{\bpi}$.
\end{definition}

\begin{example} \label{ex:231-polytope}
    Let $\bpi = 231$.
    We have $s(\bpi) = 213$ and $s^2(\bpi) = 123$. So, $\mathcal{S}^{\bpi} = \{231, 213, 123\}$, and the stack-sorting polytope of $\bpi$ is $\conv(\{231, 213, 123\})$.
    \begin{figure}[ht]
        \begin{tikzpicture}
            [scale=1,
            vertex/.style={inner sep=1pt,circle,draw=andrespink,fill=andrespink,thick}]
            \coordinate (A) at (2, 3, 1);
            \coordinate (B) at (2, 1, 3);
            \coordinate (C) at (1, 2, 3);

            \draw[black,top color=andresblue, bottom color=andresblue, fill opacity=0.3]  (C) -- (A) -- (B) -- cycle;

            \node[vertex] at (2, 3, 1){};
            \node[vertex] at (2, 1, 3){};
            \node[vertex] at (1, 2, 3){};

            \node[anchor=east] at (1, 2, 3) {\color{blue}$123$};
            \node[anchor=north] at (2, 1, 3) {\color{blue}$213$};
            \node[anchor=south west] at (2, 3, 1) {\color{blue}$231$};
        
        \end{tikzpicture}
        \caption{The simplex $\conv(\mathcal{S}^{\bpi})$ as described in Example \ref{ex:231-polytope}.}
    \end{figure}
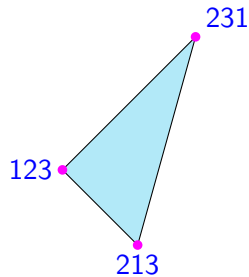
\end{example}

Note that all permutations $\bpi \in \mathfrak{S}_n$ lie on the $(n - 1)$-dimension hyperplane $x_1 + x_2 + \dots + x_n = 1 + 2 + \dots + n$.
So, we will use the concepts of Euclidean volume and relative volume to study these polytopes.

\subsection{Special permutations} \label{subsec:Ln1}

In this section, we discuss permutations of $\mathfrak{S}_n$ that end in $n1$.
The permutations are referred to as $Ln1$ permutaitons in \cite{lee2025stacksortingsimplicesgeometrylatticepoint}.

\begin{definition} \label{def:Ln1} 
    For $\bpi\in \mathfrak{S}_{n}$, $\bpi$ is an \defterm{$Ln1$} permutation if $\bpi$ ends with $n1$.
    Here, $L$ is some permutation of $\{2, 3, ..., (n-1)\}$ and
    $\mathcal{L}^n$ denotes the set of $Ln1$ permutations of length $n$. 
\end{definition}

Both Remark \ref{rem:ascent-iff} and Remark \ref{rem:descent-iff} are sufficient to prove Theorem 3.11 of \cite{lee2025stacksortingsimplicesgeometrylatticepoint}, which we restate here:
\begin{corollary}[Theorem 3.11,\cite{lee2025stacksortingsimplicesgeometrylatticepoint}] \label{cor:Ln1-(n-1)-stack-sortable}
    A permutation $\bpi \in \mathfrak{S}_n$ is exactly $(n-1)$-stack sortable if and only if $\bpi$ ends in $n1$.
\end{corollary}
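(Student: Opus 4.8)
The plan is to prove the biconditional \textbf{Corollary \ref{cor:Ln1-(n-1)-stack-sortable}} by reducing each direction to the two remarks already established, namely Remark \ref{rem:ascent-iff} and Remark \ref{rem:descent-iff}, which give sharp ``exactly $k$-stack-sortable'' criteria for the ascent-ending and descent-ending decompositions. The key observation is that any permutation $\bpi \in \sym{n}$ admits a trailing monotone block: if we write $\bpi = AB$ with $B$ the maximal descending suffix, then a permutation is pushed toward the identity at a rate governed by $|A|$. Since $\bpi$ sits in $\sym{n}$, the extreme case $|A|+1 = n-1$, i.e.\ $|A| = n-2$, is exactly where the full $n-1$ passes become necessary.

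First I would prove the reverse direction ($\bpi$ ends in $n1 \implies$ exactly $(n-1)$-stack-sortable). Suppose $\bpi$ ends in $n1$. Writing $\bpi = AB$ in the descent-ending sense, the suffix $B$ must begin with $n$ (since $n$ immediately precedes the final $1$, and $n$ is the global maximum, so $n1$ is itself a descending run) and end with $1$. By Remark \ref{rem:descent-iff}, $\bpi$ is exactly $(|A|+1)$-stack-sortable, where here $A$ consists of the first $n-2$ entries, so $|A| = n-2$ and $\bpi$ is exactly $(n-1)$-stack-sortable. I would be careful to note that the decomposition $B = n1$ with $A$ the preceding $n-2$ entries is the relevant one: $n$ at position $n-1$ and $1$ at position $n$ form a length-$2$ descending suffix, matching the hypotheses of Remark \ref{rem:descent-iff} precisely.

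For the forward direction ($\bpi$ exactly $(n-1)$-stack-sortable $\implies \bpi$ ends in $n1$), I would argue by contraposition or by extremality. The maximum number of passes any permutation in $\sym{n}$ requires is $n-1$, so being exactly $(n-1)$-stack-sortable is the maximal case. Using Remark \ref{rem:ascent-iff} and Remark \ref{rem:descent-iff}, the exact stack-sortability of $\bpi$ equals $|A|$ (ascent case) or $|A|+1$ (descent case) for the appropriate trailing-block decomposition. To force this count up to $n-1$, the trailing block must be as short as possible while the prefix $A$ is as long as possible; tracing through the sharp ``if and only if'' conditions, the only way to reach the value $n-1$ is for $\bpi$ to end in $n1$, since any permutation ending in a value other than $1$, or with $n$ not in the penultimate slot, yields a strictly shorter required prefix and hence strictly fewer than $n-1$ passes. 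I would make this precise by showing any $\bpi$ not ending in $n1$ falls under a decomposition giving exact stack-sortability at most $n-2$.

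The main obstacle I anticipate is the bookkeeping in the forward direction: I must verify that no permutation failing the $n1$-suffix condition can nonetheless require $n-1$ passes. This requires confirming that the two remarks' sharp criteria jointly exhaust all cases, i.e.\ that every permutation's required number of passes is captured by either the ascent-ending or descent-ending count, and that the value $n-1$ is attained \emph{only} by the $n1$-ending family. Concretely, I would need to rule out edge cases where the trailing monotone block interacts subtly with the position of $n$ and of $1$; the cleanest route is likely to invoke Remark \ref{rem:descent-iff} directly, noting that $\bpi$ ending in $n1$ is logically equivalent to the condition ``$B$ starts with $n$ and ends with $1$ with $|A| = n-2$,'' thereby collapsing both directions into a single application of the remark.
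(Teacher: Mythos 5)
Your overall plan is the paper's own: the paper establishes Corollary \ref{cor:Ln1-(n-1)-stack-sortable} by exactly the reduction you describe, citing Remarks \ref{rem:ascent-iff} and \ref{rem:descent-iff} as sufficient. Your reverse direction (take $B = n1$, apply Remark \ref{rem:descent-iff} with $|A| = n-2$) is correct, and your stated fix for the forward direction --- showing that any $\bpi$ not ending in $n1$ admits \emph{some} decomposition certifying exact stack-sortability at most $n-2$ --- is also correct and completable: if $\pi_n \neq 1$, take the trivial ascending suffix $B = \pi_n$ and apply Remark \ref{rem:ascent-iff}; if $\pi_n = 1$ but $\pi_{n-1} \neq n$, take $B = \pi_{n-1}1$ and apply Remark \ref{rem:descent-iff}; in either case $\bpi$ is $(n-1)$-stack-sortable but not exactly so, hence exactly $k$-stack-sortable for some $k \leq n-2$.

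Two cautions, both in your forward direction. First, the sentence asserting that ``the exact stack-sortability of $\bpi$ equals $|A|$ (ascent case) or $|A|+1$ (descent case) for the appropriate trailing-block decomposition'' is not what the remarks say: they characterize when the bounds of Proposition \ref{prop:ascent-ending} and Corollary \ref{cor:descent-ending} are attained, and the maximal-monotone-suffix decomposition does not compute the exact value in general (e.g., $\bpi = 1342$ has maximal descending suffix $B = 42$, so $|A|+1 = 3$, yet $\bpi$ is exactly $2$-stack-sortable). Second, your proposed ``collapse into a single application of Remark \ref{rem:descent-iff}'' fails: with $|A| = n-2$ the block $B = \pi_{n-1}\pi_n$ must be descending for the remark to apply at all, and it is not whenever $\pi_{n-1} < \pi_n$ (e.g., $\bpi = 2134$), so the remark is silent on precisely those permutations and the biconditional cannot be extracted from it alone. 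If you want a one-shot argument, use Remark \ref{rem:ascent-iff} instead, with the trivial ascending suffix $B = \pi_n$ and $|A| = n-1$: it says $\bpi$ is exactly $(n-1)$-stack-sortable if and only if $\pi_n = 1$ and $A$ ends with its largest value, and since $\pi_n = 1$ forces $n$ to lie in $A$, the latter condition reads $\pi_{n-1} = n$; that single application yields both directions.
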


We discuss $Ln1$ permutations further in Section \ref{sec:volume-of-polytopes}.

The authors of \cite{lee2025stacksortingsimplicesgeometrylatticepoint} also prove that all stack-sorting polytopes of $Ln1$ permutations are simplices.
In the next section, we generalize this result.

\section{Geometry of \texorpdfstring{$\conv\left(\mathcal{S}^{\bpi}\right)$}-} \label{sec:simplices}

In this section, we prove that any permutation $\bpi \in \mathfrak{S}_n$ guarantees that $\conv(\mathcal{S}^{\bpi})$ forms a simplex. 

\begin{definition} \label{def:fixed-values}
    Let $\bpi \in \mathfrak{S}_n$ be some permutation.
    We call a value $\pi_k$ of $\bpi$ a \defterm{fixed value} if $\pi_k = k$.
\end{definition}

\begin{lemma}\label{lemma:end-fixed-points}
    Suppose $\bpi \in \mathfrak{S}_n$ ends in exactly $k$ fixed values.
    Then, $s(\bpi)$ ends in at least $k + 1$ fixed values.
\end{lemma}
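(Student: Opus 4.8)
The plan is to read off the end of $\bpi$ explicitly and then track a single value through one pass of the algorithm. Since $\bpi$ ends in exactly $k$ fixed values, its last $k$ entries are $\pi_{n-k+1} = n-k+1,\ \pi_{n-k+2} = n-k+2,\ \dots,\ \pi_n = n$. These are precisely the $k$ largest values of $\{1,\dots,n\}$, arranged in increasing order, so the first $n-k$ entries of $\bpi$ form a permutation of $\{1,\dots,n-k\}$; in particular every entry in a position $>n-k$ exceeds every entry in a position $\le n-k$, and $n-k$ is the largest value appearing among the first $n-k$ entries. (I may assume $k<n$, since if $k=n$ then $\bpi=\ve$ and the statement is vacuous.) My goal is to show that in $s(\bpi)$ the positions $n-k,\, n-k+1,\, \dots,\, n$ are all fixed, which yields the required $k+1$ trailing fixed values.

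First I would locate where the value $n-k$ is output. Because $n-k$ is the maximum of the first block, at the moment the algorithm reaches it the stack holds only smaller values, which are all popped before $n-k$ is pushed; hence $n-k$ is pushed onto an empty stack and sits at its bottom. It remains there until the algorithm encounters a value larger than $n-k$. Since no later entry of the first block exceeds $n-k$, and the tail begins with $n-k+1$, the first such value is $n-k+1$, occurring at position $n-k+1$. When $n-k+1$ is processed it pops the entire stack, and because $n-k$ lies at the bottom it is the last of the first-block values to be output. Thus every value of $\{1,\dots,n-k-1\}$ is output before $n-k$, so $n-k$ lands in position $n-k$ of $s(\bpi)$.

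It then remains to handle the increasing tail. After $n-k$ is popped and $n-k+1$ pushed, each subsequent entry $n-k+i+1$ exceeds the single value $n-k+i$ sitting on the stack, so it pops $n-k+i$ into position $n-k+i$ and pushes $n-k+i+1$; the final flush deposits $n$ into position $n$. Hence $s(\bpi)$ agrees with the identity in positions $n-k$ through $n$, giving at least $k+1$ trailing fixed values. The step I expect to be the main obstacle is the middle one: justifying that $n-k$ occupies exactly position $n-k$ rather than merely appearing somewhere inside the sorted tail. This hinges on the two facts that $n-k$ is pushed onto an empty stack (so it sits at the bottom) and that it is popped precisely when $n-k+1$ is read (so all smaller values precede it), both of which follow from the block structure established in the first paragraph. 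The placement of the tail values $n-k+1,\dots,n$ is comparatively routine once this is in hand.
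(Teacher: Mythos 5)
Your proposal is correct and follows essentially the same route as the paper's proof: both track the value $n-k$ (the maximum of the first $n-k$ entries), show it is pushed onto an empty stack and remains at the bottom until position $n-k+1$ is read, so it is output in position $n-k$, after which the increasing tail is emitted in order. Your write-up simply supplies more detail at the step the paper states tersely.
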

\begin{proof}
    All values of $\bpi$ greater than $n - k$ are fixed.
    So, the value $n - k$ is the largest of the first $n - k$ values in $\bpi$.

    We pass $\bpi$ through the stack-sorting algorithm.
    When the algorithm reaches the value $n - k$, we pop the remaining values off the stack and push $n - k$ onto the bottom of the stack. 
    
    When the algorithm reaches $\pi_{n - k + 1}$, $n - k$ will still be on the bottom of the stack.
    We pop the remaining values off the stack and $n - k$ is written in the $(n - k)^{\text{th}}$ position.
    The algorithm will then push and pop the remaining values of $\bpi$, which are in ascending order.

    So, $s(\bpi)$ ends in at least $k + 1$ fixed values.
\end{proof}

\begin{theorem} \label{thm:all-simplices}
Let $\bpi \in \mathfrak{S}_n$ be exactly $k$-stack-sortable.
Then, $\triangle := conv(\mathcal{S}^{\bpi})$ is a $k$-simplex.
\end{theorem}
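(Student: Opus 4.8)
The plan is to realize $\triangle = \conv(\mathcal{S}^{\bpi})$ as the convex hull of exactly $k+1$ affinely independent points; since a $k$-dimensional polytope with exactly $k+1$ vertices is a $k$-simplex, this will finish the proof. First I would record that $\mathcal{S}^{\bpi} = \{\bpi, s(\bpi), \dots, s^{k-1}(\bpi), s^k(\bpi)=\mathbf{e}\}$ consists of exactly $k+1$ \emph{distinct} permutations. Distinctness is forced by exact $k$-stack-sortability: if $s^i(\bpi)=s^j(\bpi)$ for some $0\le i<j\le k$, then applying $s$ another $k-j$ times yields $s^{k-(j-i)}(\bpi)=\mathbf{e}$ with $k-(j-i)<k$, contradicting the minimality of $k$.

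Next I would pass to displacement vectors. For a permutation $\sigma\in\mathfrak{S}_n$, set $d(\sigma):=\sigma-\mathbf{e}\in\mathbb{R}^n$, so that $\sigma$ ends in exactly $m$ fixed values precisely when $d(\sigma)$ has exactly $m$ trailing zero coordinates, and $d(\mathbf{e})=\mathbf{0}$. Taking $\mathbf{e}=s^k(\bpi)$ as a base vertex, affine independence of $\mathcal{S}^{\bpi}$ is equivalent to linear independence of the $k$ vectors $v_i:=d(s^i(\bpi))$ for $i=0,1,\dots,k-1$. This is the point at which the combinatorial content enters through Lemma \ref{lemma:end-fixed-points}.

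Writing $f_i$ for the exact number of trailing fixed values of $s^i(\bpi)$, Lemma \ref{lemma:end-fixed-points} gives $f_{i+1}\ge f_i+1$, so $f_0<f_1<\dots<f_{k-1}$, while $f_i<n$ for each $i<k$ because $s^i(\bpi)\neq\mathbf{e}$. Equivalently, the index $\ell_i:=n-f_i$ of the \emph{last} nonzero coordinate of $v_i$ satisfies $\ell_0>\ell_1>\dots>\ell_{k-1}\ge 1$. This strictly decreasing ``staircase'' of last-nonzero positions delivers linear independence by a standard echelon argument: in any relation $\sum_{i} c_i v_i=\mathbf{0}$, the coordinate in position $\ell_0$ receives a nonzero contribution only from $v_0$, since every $v_i$ with $i\ge 1$ has $\ell_i<\ell_0$ and hence vanishes in position $\ell_0$ (that position lies inside its trailing-zero block); this forces $c_0=0$, and iterating at positions $\ell_1,\ell_2,\dots$ kills the remaining coefficients. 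Thus the $v_i$ are linearly independent, the $k+1$ points of $\mathcal{S}^{\bpi}$ are affinely independent, and $\triangle$ is a $k$-simplex.

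I expect essentially all the genuine difficulty to be already packaged in Lemma \ref{lemma:end-fixed-points}; the remaining step is the conceptual one of recognizing that a monotone count of trailing fixed values translates into an echelon (staircase) structure on the displacement vectors, after which linear independence comes for free. The main points requiring care are fixing the orientation of the argument correctly (tracking \emph{last} nonzero coordinates and the trailing-zero blocks rather than leading entries) and confirming that the iterates are genuinely distinct, so that we really do obtain $k+1$ vertices rather than fewer.
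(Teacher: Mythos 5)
Your proposal is correct and follows essentially the same route as the paper: subtract $\mathbf{e}$ to get displacement vectors, invoke Lemma \ref{lemma:end-fixed-points} to obtain strictly increasing trailing-zero counts, and conclude linear (hence affine) independence via the resulting echelon/staircase structure. Your explicit verification that the iterates $s^i(\bpi)$ are distinct is a small point the paper leaves implicit, but otherwise the arguments coincide.
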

\begin{proof}
    We show that $\triangle$ is a simplex by showing the vectors in $\mathcal{S}^{\bpi}$ are affinely independent.
    
    There are $k + 1$ vectors in $\mathcal{S}^{\bpi}$, with $s^{k} = \mathbf{e}$.
    Now, subtract $\ve$ component-wise from each vector in $\mathcal{S}^{\bpi} \setminus \{\ve\}$.
    We show these $k$ vectors are linearly independent.
    
    From Lemma \ref{lemma:end-fixed-points}, we know that $s^{i + 1}(\bpi)$ must end in a strictly greater number of fixed points than $s^i(\bpi)$.
    So, $s^{i + 1}(\bpi) - \ve$ ends in strictly more zeros than $s^{i}(\bpi)$.
    
    Now, consider the $k \times n$ matrix such that the $i^{\text{th}}$ row is the vector  $s^i(\bpi) - \ve$.
    The $i^{\textbf{th}}$ row ends in strictly more zeros than all rows above.
    
    Now, flip this matrix across a vertical axis, which is equivalent to relabeling our variables.
    Here, the $i^{\text{th}}$ row has strictly more leading zeros than the above rows.
    We also have no zero rows.
    So, our matrix is in Row Echelon Form, and our vectors $s^i(\bpi) - \ve$ are linearly independent.
    So, the $k + 1$ vectors $s^i(\bpi)$ are affinely independent.
    Therefore, $\triangle$ is a $k$-simplex.
\end{proof}

\begin{example} \label{ex:simplex}
    Let $\bpi = 31452$.
    We have $\mathcal{S}^{\bpi} = \{31452, 13425, 13245, 12345\}$.
    The convex hull of $\mathcal{S}^{\bpi}$ forms a $3$-simplex.
    \begin{figure}[ht] \label{fig:simplex}
        \begin{tikzpicture}
            [scale=1,
            vertex/.style={inner sep=1pt,circle,draw=andrespink,fill=andrespink,thick}]
            \coordinate (A) at (3, 1, 4, 5, 2);
            \coordinate (B) at (1, 3, 4, 2, 5);
            \coordinate (C) at (1, 3, 2, 4, 5);
            \coordinate (D) at (1, 2, 3, 4, 5);
    
            \draw[gray,top color=andresblue, bottom color=andresblue, fill opacity=0.3, dashed]  (A) -- (B) -- (C) -- cycle;
            \draw[gray,top color=andresblue, bottom color=andresblue, fill opacity=0.3]  (A) -- (C) -- (D) -- cycle;
            \draw[gray,top color=andresblue, bottom color=andresblue, fill opacity=0.3]  (B) -- (C) -- (D) -- cycle;
            
            \draw[gray,top color=andresblue, bottom color=andresblue, fill opacity=0.3, dashed] (A) -- (B);
            \draw[gray,top color=andresblue, bottom color=andresblue, fill opacity=0.3] (A) -- (C);
            \draw[gray,top color=andresblue, bottom color=andresblue, fill opacity=0.3] (A) -- (D);
            \draw[gray,top color=andresblue, bottom color=andresblue, fill opacity=0.3] (B) -- (C);
            \draw[gray,top color=andresblue, bottom color=andresblue, fill opacity=0.3] (B) -- (D);
            \draw[gray,top color=andresblue, bottom color=andresblue, fill opacity=0.3] (C) -- (D);
    
            \node[vertex] at (3, 1, 4, 5, 2){};
            \node[vertex] at (1, 3, 4, 2, 5){};
            \node[vertex] at (1, 3, 2, 4, 5){};
            \node[vertex] at (1, 2, 3, 4, 5){};
    
            \node[anchor=north west] at (3, 1, 4, 5, 2) {\color{blue}$31452$};
            \node[anchor=east] at (1, 3, 4, 2, 5) {\color{blue}$13425$};
            \node[anchor=south] at (1, 3, 2, 4, 5) {\color{blue}$13245$};
            \node[anchor = north east] at (1, 2, 3, 4, 5) {\color{blue}$12345$};
        
        \end{tikzpicture}
        \caption{The 3-simplex formed by $\mathcal{S}^{\bpi}$ in Example \ref{ex:simplex}.}
    \end{figure}
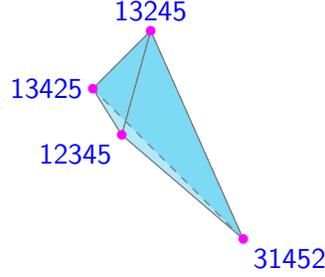
\end{example}

\section{Volume of \texorpdfstring{$\conv\left(\mathcal{S}^{\bpi}\right)$}-} \label{sec:volume-of-polytopes}

This section focuses on the relative and Euclidean volumes of $Ln1$ simplices. We prove the relative volume is 1 and the Euclidean volume is $\sqrt{n}$ for all $Ln1$ simplices.

\begin{lemma}\label{lemma:lattice-basis}
Let $\mathcal{B}$ be the set $\{(1, -1, 0, \dots, 0), (0, 1, -1, \dots, 0),  \dots, $ $ (0, \dots, 0, 1, -1)\}$ in $\mathbb{R}^n$, and let $H$ be the hyperplane $x_1 + x_2 + \dots + x_n = 1 + 2 + \dots + n$. Then, $\mathcal{B}$ forms a basis for the lattice $\mathbb{Z}^n \cap H$.
\end{lemma}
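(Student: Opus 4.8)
The plan is to read the statement correctly first: every vector in $\mathcal{B}$ has coordinates summing to $0$, so $\mathcal{B}$ does not lie on the affine hyperplane $H$ itself but in its direction space $H_0 := \{\mathbf{x}\in\mathbb{R}^n : x_1+\cdots+x_n = 0\}$. Accordingly, ``$\mathcal{B}$ is a basis for the lattice $\mathbb{Z}^n\cap H$'' means that $\mathcal{B}$ is a $\mathbb{Z}$-basis of the translation lattice $\Lambda := \mathbb{Z}^n\cap H_0$; equivalently, fixing the base point $\mathbf{e}=(1,2,\dots,n)\in\mathbb{Z}^n\cap H$, every integer point of $H$ is $\mathbf{e}$ plus a unique integer combination of $\mathcal{B}$. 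This is exactly the data needed to pin down a fundamental parallelepiped of $H$ in the sense of Definition~\ref{def:vol}. I would therefore prove three things: (i) $\mathcal{B}\subseteq\Lambda$; (ii) $\mathcal{B}$ is linearly independent over $\mathbb{R}$, hence spans the $(n-1)$-dimensional space $H_0$; and (iii) $\mathcal{B}$ generates $\Lambda$ over $\mathbb{Z}$.

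Steps (i) and (ii) are quick. Writing $\mathbf{b}_i := \mathbf{e}_i - \mathbf{e}_{i+1}$ for $1\le i\le n-1$, each $\mathbf{b}_i$ has integer entries summing to $0$, giving (i). For (ii), the $(n-1)\times n$ matrix whose $i$-th row is $\mathbf{b}_i$ is already in echelon form (its leading nonzero entries march strictly to the right as $i$ increases), so its rows are independent; since $\dim H_0 = n-1$, they form an $\mathbb{R}$-basis of $H_0$.

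The content is in (iii). Given an arbitrary $\mathbf{v}=(v_1,\dots,v_n)\in\Lambda$, I solve $\mathbf{v}=\sum_{i=1}^{n-1} c_i\mathbf{b}_i$ coordinatewise. Comparing first coordinates forces $c_1 = v_1$; comparing the $j$-th coordinate for $2\le j\le n-1$ gives $c_j - c_{j-1} = v_j$, so that telescoping yields $c_j = v_1+v_2+\cdots+v_j$, a unique integer because each $v_i\in\mathbb{Z}$. Thus the only candidate coefficients are the partial sums of $\mathbf{v}$, and they are integers. The single remaining consistency check is the last coordinate, which reads $-c_{n-1}=v_n$; this holds precisely because $\sum_{i=1}^n v_i = 0$ forces $c_{n-1}=v_1+\cdots+v_{n-1} = -v_n$. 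Hence every $\mathbf{v}\in\Lambda$ is an integer combination of $\mathcal{B}$, and combined with (ii) this shows $\mathcal{B}$ is a $\mathbb{Z}$-basis of $\Lambda$.

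The only genuine obstacle is making sure (iii) really delivers \emph{integer} coefficients for an \emph{overdetermined} system ($n$ coordinate equations in $n-1$ unknowns): linear independence alone would guarantee a real solution in $H_0$, but integrality and the automatic satisfaction of the extra ($n$-th coordinate) equation both rely on the zero-sum defining relation of $\Lambda$. I would flag that the telescoping partial-sum formula handles integrality and that the zero-sum relation is exactly what reconciles the last equation, rather than leaving these as ``routine.'' One may recognize $\Lambda$ as the root lattice $A_{n-1}$ with $\mathcal{B}$ its simple roots, but the self-contained argument above is cleaner for our purposes.
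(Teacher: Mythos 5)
Your proposal is correct and follows essentially the same route as the paper: the paper also passes to the translated hyperplane $x_1+\cdots+x_n=0$ and exhibits an arbitrary integer point of that lattice as the combination $\sum_{j} (v_1+\cdots+v_j)\,\mathbf{b}_j$ with partial-sum coefficients, together with linear independence. Your extra care in deriving the coefficients from the overdetermined system and checking that the last coordinate equation is reconciled by the zero-sum relation is a slightly more explicit presentation of the same argument.
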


\begin{proof}
We show that $\mathcal{B}$ forms a basis for the lattice $\mathbb{Z}^n \cap H'$, where $H'$ is the hyperplane $x_1 + x_2 + ... + x_n = 0$, a translation of $H$.
Consider an arbitrary lattice point $\ba = (a_1, a_2, \dots, a_n)$ on $H'$.
Note that $\ba$ must satisfy $a_n = -(a_1 + a_2 + \dots + a_{n - 1})$.
We can take the following linear combination:
\begin{align*}
    a_1 \cdot (1, -1, 0, 0, \dots, 0) + (a_1 + a_2) \cdot (0, 1, -1, 0, \dots, 0)  + \dots &+ (a_1 + a_2 + \dots + a_{n - 1})\cdot (0, 0, \dots, 0, 1, -1)\\ &= (a_1, a_2, ..., a_{n - 1}, -(a_1 + a_2 + \dots + a_{n - 1})) \\&=
    (a_1, a_2, \dots, a_n).
\end{align*}
So, the vectors in $\mathcal{B}$ span the lattice.

Also, the vectors in $\mathcal{B}$ are linearly independent.
So, $\mathcal{B}$ forms a basis for $\mathbb{Z}^n \cap H'$.
Thus, $\mathcal{B}$ also forms a basis for $\mathbb{Z}^n \cap H$.
\end{proof}

Now, we prove that the Euclidean volume of the fundamental parallelepiped of an $Ln1$ simplex is $\sqrt{n}.$
\begin{lemma}\label{lemma:parallelepiped-vol}
Let $\bpi \in \mathcal{L}^n$.
The fundamental parallelepiped $\parallelogram$ for $\mathrm{span}(\conv(\mathcal{S}^{\bpi}))$ has Euclidean volume $\sqrt{n}.$
\end{lemma}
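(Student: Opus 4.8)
The plan is to identify the affine span of the simplex with a single hyperplane, import the explicit lattice basis from Lemma~\ref{lemma:lattice-basis}, and then reduce the volume computation to the determinant of a Gram matrix.

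First I would pin down $\mathrm{span}(\conv(\mathcal{S}^{\bpi}))$. Since $\bpi \in \mathcal{L}^n$ is exactly $(n-1)$-stack-sortable by Corollary~\ref{cor:Ln1-(n-1)-stack-sortable}, Theorem~\ref{thm:all-simplices} shows that $\triangle = \conv(\mathcal{S}^{\bpi})$ is an $(n-1)$-simplex, so its affine span is $(n-1)$-dimensional. Every permutation lies on the hyperplane $H\colon x_1 + x_2 + \cdots + x_n = 1 + 2 + \cdots + n$, which is itself $(n-1)$-dimensional. An $(n-1)$-dimensional affine subspace contained in the $(n-1)$-dimensional hyperplane $H$ must equal $H$, so $\mathrm{span}(\triangle) = H$. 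Consequently the fundamental parallelepiped in question is generated by any lattice basis of $\mathbb{Z}^n \cap H$, and Lemma~\ref{lemma:lattice-basis} hands us such a basis explicitly: $\mathcal{B} = \{\bb_1, \dots, \bb_{n-1}\}$, where $\bb_i$ is the vector with a $1$ in coordinate $i$, a $-1$ in coordinate $i+1$, and $0$ elsewhere.

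Next I would compute the $(n-1)$-dimensional Euclidean volume of the parallelepiped spanned by $\mathcal{B}$, which equals $\sqrt{\det G}$, where $G$ is the Gram matrix with entries $G_{ij} = \langle \bb_i, \bb_j \rangle$. A direct dot-product check gives $\langle \bb_i, \bb_i \rangle = 2$, $\langle \bb_i, \bb_{i\pm 1} \rangle = -1$, and $\langle \bb_i, \bb_j \rangle = 0$ when $|i-j| \geq 2$. Thus $G$ is the $(n-1)\times(n-1)$ tridiagonal matrix with $2$'s along the diagonal and $-1$'s along the sub- and superdiagonals.

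Finally I would evaluate $\det G$. Cofactor expansion along the first row yields the recurrence $D_m = 2D_{m-1} - D_{m-2}$ for the determinant $D_m$ of the $m \times m$ matrix of this form, with initial values $D_1 = 2$ and $D_2 = 3$; solving gives $D_m = m+1$, so $D_{n-1} = n$. Therefore the fundamental parallelepiped has Euclidean volume $\sqrt{\det G} = \sqrt{n}$, as claimed. The computation is routine, so the only genuinely load-bearing step is the identification $\mathrm{span}(\triangle) = H$, which is what licenses the use of Lemma~\ref{lemma:lattice-basis}; that identification follows immediately from the dimension count above.
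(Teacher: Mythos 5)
Your proof is correct and follows essentially the same route as the paper: the Gram matrix $G$ you form is exactly the product $AA^T$ the paper computes (citing a determinant formula for parallelepiped volume), and both arguments reduce to the same tridiagonal determinant recurrence $D_m = 2D_{m-1} - D_{m-2}$ with $D_1 = 2$, $D_2 = 3$. Your explicit dimension count justifying $\mathrm{span}(\triangle) = H$ via Corollary~\ref{cor:Ln1-(n-1)-stack-sortable} and Theorem~\ref{thm:all-simplices} is slightly more careful than the paper's bare assertion, but the substance is identical.
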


\begin{proof}
    Note that $\conv(\mathcal{S}^{\bpi})$ is an $(n - 1)$-dimension polytope living on the $(n-1)$-dimension hyperplane \[H := \{\vx \in \mathbb{R}^n \mid x_1 + x_2 + \dots + x_n = 1 + 2 + \dots + n\}.\]
    So, $\mathrm{span}(\conv(\mathcal{S}^{\bpi})) = H$.
    From Lemma \ref{lemma:lattice-basis}, \[\mathcal{B} = \{(1, -1, 0, 0, \dots, 0),  (0, 1, -1, 0, \dots, 0), (0, 0, 1, -1, \dots, 0),\dots, (0, \dots, 0, 1, -1)\}\] forms a basis for the lattice $\mathbb{Z}^n \cap H$.
    We can take these basis vectors as generators for a fundamental parallelepiped $\parallelogram.$
    
    Using Theorem 7 of \cite{peng2007determinant}, we calculate the Euclidean volume of $\parallelogram$ by $\EVol(\parallelogram)^2 = \det(AA^T)$, where $A$ is the $n - 1 \times n$ matrix whose rows are the vectors in $\mathcal{B}$.
    We evaluate:
    \begin{align*}
        AA^T = \mat{1 & -1 & 0 & \cdots & 0 \\
                  0 & 1 & -1 & \cdots & 0 \\
                  \vdots & \vdots & \vdots & \ddots & \vdots \\
                  0 & 0 & 0 & \cdots & -1}
             \mat{1 & 0 & \cdots & 0 \\
                  -1 & 1 & \cdots & 0 \\
                  0 & -1 & \cdots & 0 \\
                  \vdots & \vdots & \ddots & \vdots \\
                  0 & 0 & \cdots & -1}
        = \mat{2 & -1 & 0 & \cdots & 0 \\
            -1 & 2 & -1 & \cdots & 0 \\
            0 & -1 & 2 & \cdots & 0 \\
            \vdots & \vdots & \vdots & \ddots & \vdots \\
              0 & 0 & 0 & \cdots & 2}.
    \end{align*}
    Note that $AA^T$ is $(n - 1) \times (n - 1)$.
    Call this matrix $A_{n - 1}$, and call its determinant $d_{n - 1}$. 
    We compute $d_{n - 1}$ by cofactor expansion across the top row:
    \begin{align*}
    d_{n - 1}
            &= 2\det(A_{n - 2})
            - (-1) \det \left(
                \begin{bmatrix}
                    -1 & -1 & 0 & 0 & \dots & 0 \\
                    \textbf{0} & & & A_{n - 3}
                \end{bmatrix} \right). \\
    \end{align*}
    For this second matrix, we again expand across the top row:
    \begin{align*}
    d_{n - 1}
            &= 2 \det(A_{n - 2})
            - (-1)(-1) \det(A_{n - 3})\\
            &= 2d_{n-2} - d_{n-3}.
    \end{align*}
    
    Now, we show that $d_{n - 1} = n$ by induction.
    
    We have base cases of $n = 2$ and $n = 3$:
    \begin{align*}
        d_1 &= \det\left(\begin{bmatrix}2\end{bmatrix}\right) = 2\text{, and}\\
        d_2 &= \det\left(\begin{bmatrix} 2 & -1 \\ -1 & 2 \end{bmatrix}\right) = 3.
    \end{align*}
    
    For our inductive hypothesis, we assume $d_{n - 2} = n - 1$ and $d_{n - 3} = n - 2$.
    For our inductive step, we show $d_{n - 1} = n$.
    We have $d_{n - 1} = 2d_{n - 2} - d_{n - 3} = 2(n - 1) - (n - 2) = n$. 
    So, $\EVol(\parallelogram)^2 = n,$ and the Euclidean volume of $\parallelogram$ is $\sqrt{n}.$
\end{proof}

Now that we have established the Euclidean volume of the fundamental parallelepiped, we find the Euclidean volume of the simplex itself. 

\begin{lemma}\label{lemma:end-zeros}
Let $\bpi \in \mathcal{L}^n$.
Then, for $0 \leq k \leq n - 2$, the vector $s^k(\bpi) - \ve$ ends with \[((-n+k+1), 0, 0, \dots, 0).\]
Here, $s^k(\bpi) - \ve$ ends with exactly $k$ zeros.
\end{lemma}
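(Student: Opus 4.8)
The plan is to prove a stronger structural statement by induction on $k$ and then read the lemma off from it. For $\bpi \in \mathcal{L}^n$ and $0 \le k \le n-2$, write $m := n-k$ and let $(\star_k)$ be the conjunction of three assertions: $s^k(\bpi)_m = 1$; $s^k(\bpi)_j = j$ for every $j > m$; and $s^k(\bpi)_{m-1} = m$. Granting $(\star_k)$ the lemma is immediate: positions $m+1,\dots,n$ contribute the $k$ trailing zeros of $s^k(\bpi) - \ve$, while position $m = n-k$ contributes $1 - (n-k) = -n+k+1$, which is nonzero since $k \le n-2$ forces $n-k \ge 2$; hence the trailing run of zeros has length exactly $k$ and is preceded by the entry $-n+k+1$, as claimed. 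The base case $(\star_0)$ is precisely the hypothesis $\bpi \in \mathcal{L}^n$: an $Ln1$ permutation ends in $n1$, so $\pi_n = 1$ and $\pi_{n-1} = n$, and there are no positions beyond $n$.

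For the inductive step I would assume $(\star_k)$ with $m = n-k \ge 3$ (so that $(\star_{k+1})$ still has three meaningful clauses) and analyze one pass of $s$ applied to $\sigma := s^k(\bpi)$. By $(\star_k)$ the values $\{1,\dots,m\}$ occupy positions $1,\dots,m$ with $\sigma_{m-1}=m$ and $\sigma_m = 1$, while positions $m+1,\dots,n$ carry the increasing tail $m+1,\dots,n$. The key mini-principle is that the maximum of any already-read prefix sits at the bottom of the stack and stays there until a larger value is read. Applying it to the prefix $\sigma_1\cdots\sigma_{m-1}$: its maximum is $\sigma_{m-1}=m$, and when $m$ is read every smaller element then on the stack is popped (in increasing order, the last being the bottom, which is the maximum $m-1$ of $\sigma_1\cdots\sigma_{m-2}$), after which $m$ is pushed to the bottom; reading $\sigma_m = 1$ pushes $1$ with no pop, so just before the tail arrives the stack is exactly $[m,1]$. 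The entry $m+1$ exceeds everything on the stack, dumping it as $1$ then $m$, and the increasing tail $m+1,\dots,n$ then cascades out in order at the very end. Thus all of $\{1,\dots,m\}$ are output before the tail and fill positions $1,\dots,m$ of $s(\sigma)$, the tail fills positions $m+1,\dots,n$ (fixed), the value $1$ lands immediately before $m$, and the last value emitted before $1$ is $m-1$. This gives $s(\sigma)_{m-1} = 1$, $s(\sigma)_{m} = m$, $s(\sigma)_{m-2} = m-1$, and $s(\sigma)_j = j$ for $j > m$, which is exactly $(\star_{k+1})$.

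The main obstacle is choosing the right invariant: the bare claim that $1$ sits at position $n-k$ above a fixed tail does not propagate, because after one more pass the landing position of $1$ depends on how many elements remain beneath it on the stack when the tail is reached. Forcing that count to be $1$ is precisely what the extra clause $s^k(\bpi)_{n-k-1} = n-k$ guarantees (it makes $m$ the unique right-to-left maximum of the active block), so this strengthening is genuinely needed rather than cosmetic. The remaining care is bookkeeping the stack through the pass and respecting the boundary values of $k$: the clause $s^k(\bpi)_{n-k-1}=n-k$ is meaningful only for $k \le n-2$, and the step above is run only for $k \le n-3$ (where $m-2 \ge 1$), matching the range $0 \le k \le n-2$ in the statement and ending at $s^{n-2}(\bpi) = 213\cdots n$. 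As a consistency check, one could also derive the ``exactly $k$ zeros'' half independently from Lemma \ref{lemma:end-fixed-points} and Corollary \ref{cor:Ln1-(n-1)-stack-sortable} — since $\bpi$ ends in $0$ fixed values and is exactly $(n-1)$-stack-sortable, the number of trailing fixed values rises by exactly one through the first $n-1$ passes — but the induction above already supplies this together with the value $-n+k+1$ at position $n-k$.
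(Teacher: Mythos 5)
Your proof is correct, but it takes a genuinely different (and more self-contained) route than the paper. The paper's proof is a one-line deduction from a cited external result, Theorem 3.5 of \cite{lee2025stacksortingsimplicesgeometrylatticepoint}, which asserts that $s^k(\bpi)$ ends with $(1, n-k+1, \dots, n)$; subtracting $\ve$ immediately gives the $k$ trailing zeros and the entry $-n+k+1$ in position $n-k$. What you have done is essentially reprove that cited theorem from scratch: your strengthened invariant $(\star_k)$ --- that $s^k(\bpi)$ ends with $(n-k)\,1\,(n-k+1)\cdots n$ --- is precisely the statement of that theorem (it is also quoted in this form in the proof of Lemma \ref{lemma:2Ln1-begins-2}), and your inductive step via the ``prefix maximum sits at the bottom of the stack'' principle is sound: the clause $\sigma_{m-1}=m$ correctly forces the stack to be exactly $[m,1]$ when the increasing tail arrives, which is what makes the position of $1$ propagate, and your identification of $m-1$ as the last value popped before $1$ correctly regenerates the third clause at the next step. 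Your observation that the bare conclusion of the lemma does not propagate on its own, so that the extra clause is genuinely needed, is exactly right. What each approach buys: the paper's version is short and leans on prior work; yours makes the argument self-contained and makes visible \emph{why} the tail structure persists, at the cost of redoing an argument already available in the literature. The only caveat is bookkeeping at the boundary, which you have handled (base case $k=0$ is the $Ln1$ hypothesis, the step runs only for $m\ge 3$, and $n-k\ge 2$ guarantees the entry $-n+k+1$ is nonzero so the count of trailing zeros is exact).
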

\begin{proof}
    By Theorem 3.5 of \cite{lee2025stacksortingsimplicesgeometrylatticepoint},
    $s^k(\bpi)$ ends with $(1, n - k + 1, n - k + 2, \dots, n).$
    So, $s^k(\bpi) - \ve$ ends with $((-n + k + 1), 0, 0, \dots, 0)$, with $k$ zeros.
\end{proof}

\begin{lemma}\label{lemma:simplex-evol}
    Let $\bpi \in \mathcal{L}^n$.
    Let $\triangle = \conv(\mathcal{S}^{\bpi})$.
    Then, the Euclidean volume $\EVol(\triangle) = \sqrt{n}$.
\end{lemma}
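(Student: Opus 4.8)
The plan is to compute $\EVol(\triangle)$ from the standard Gram-determinant formula for a simplex and to factor out the lattice geometry that Lemma~\ref{lemma:parallelepiped-vol} already controls. Since $\bpi \in \mathcal{L}^n$ is exactly $(n-1)$-stack-sortable by Corollary~\ref{cor:Ln1-(n-1)-stack-sortable}, the set $\mathcal{S}^{\bpi} = \{s^0(\bpi), \dots, s^{n-1}(\bpi)\}$ has $n$ elements with $s^{n-1}(\bpi) = \ve$, and $\triangle$ is an $(n-1)$-simplex by Theorem~\ref{thm:all-simplices}. Taking $\ve$ as the base vertex, let $M$ be the $(n-1)\times n$ matrix whose $k$-th row is the edge vector $v_k := s^k(\bpi) - \ve$ for $k = 0, \dots, n-2$; then $\EVol(\triangle) = \tfrac{1}{(n-1)!}\sqrt{\det(MM^T)}$. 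First I would write each $v_k$ in the lattice basis $\mathcal{B}$ of Lemma~\ref{lemma:lattice-basis}: since the $v_k$ are integer vectors lying on the translate of $H$ through the origin, there is an integer coordinate matrix $C$ with $M = CB$, where $B$ is the $(n-1)\times n$ matrix whose rows are the vectors of $\mathcal{B}$. Then $\det(MM^T) = \det(C)^2\det(BB^T)$, and Lemma~\ref{lemma:parallelepiped-vol} gives $\det(BB^T) = n$, so $\EVol(\triangle) = \tfrac{|\det C|}{(n-1)!}\sqrt{n}$. The entire computation therefore reduces to showing $|\det C| = (n-1)!$.

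To identify $C$, I would use the formula from the proof of Lemma~\ref{lemma:lattice-basis}: the $\mathcal{B}$-coordinates of a vector $(a_1, \dots, a_n)$ summing to zero are its partial sums $(a_1,\, a_1+a_2,\, \dots,\, a_1+\cdots+a_{n-1})$. Applying this to $v_k$ and using Lemma~\ref{lemma:end-zeros}, which says $v_k$ ends in $(-n+k+1, 0, \dots, 0)$ with exactly $k$ trailing zeros, I get that the partial sums of $v_k$ are constant from index $n-k$ onward; since the total sum is $0$ and the tail is $0$, these partial sums all vanish, while the partial sum at index $n-k-1$ equals $-(-n+k+1) = n-k-1$. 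Hence the $k$-th row of $C$ has its last nonzero entry in column $n-k-1$, of value $n-k-1$, with zeros in all later columns. Ordering the rows by $k = 0, \dots, n-2$ makes $C$ anti-triangular, with anti-diagonal entries $n-1, n-2, \dots, 1$, so $|\det C| = \prod_{k=0}^{n-2}(n-k-1) = (n-1)!$.

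Combining these gives $\EVol(\triangle) = \tfrac{(n-1)!}{(n-1)!}\sqrt{n} = \sqrt{n}$, as claimed. The main obstacle is the second paragraph: pinning down the structure of $C$ precisely enough to read off its determinant. The crucial points are that the vanishing of the trailing entries of $v_k$ forces the corresponding partial sums to be zero (because the full coordinate vector sums to zero), and that the single surviving pivot in each row is exactly $n-k-1$; once this anti-triangular pattern is established, the determinant is immediate. I would also verify the bookkeeping that $M = CB$ with $C$ integral, which is guaranteed because $\mathcal{B}$ is a lattice basis for $\mathbb{Z}^n \cap H$ and each $v_k$ is a difference of lattice points of $H$.
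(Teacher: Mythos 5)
Your proposal is correct, and it takes a genuinely different route from the paper. The paper argues by induction on dimension: it builds the simplex one vertex at a time as a sequence of pyramids $\triangle_2 \subset \triangle_3 \subset \cdots \subset \triangle_n$, and at each step computes the height of the new apex over the previous base by splitting the distance into two orthogonal pieces (the distance $i$ to the coordinate hyperplane $x_{i+1}=0$, plus the distance $\sqrt{i}$ from the projection to the hyperplane $x_1+\cdots+x_i=0$), obtaining $h=\sqrt{i(i+1)}$ and hence $\EVol(\triangle_{i+1}) = \sqrt{i}\cdot\sqrt{i(i+1)}/i = \sqrt{i+1}$. You instead compute the Gram determinant $\det(MM^T)$ in one shot by changing to the lattice basis $\mathcal{B}$ of Lemma \ref{lemma:lattice-basis}, factoring $\det(MM^T) = \det(C)^2\det(BB^T)$, and reading off $|\det C| = (n-1)!$ from the anti-triangular structure that Lemma \ref{lemma:end-zeros} forces on the partial-sum coordinates. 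Both arguments lean on Lemma \ref{lemma:end-zeros} as the essential structural input; your version trades the paper's geometric height computation for a determinant computation. A notable byproduct of your approach: the quantity $|\det C|/(n-1)!$ \emph{is} the relative volume of $\triangle$, so your second paragraph proves $\vol(\triangle)=1$ (Theorem \ref{thm:relvol-1}) directly, without ever needing the value $\sqrt{n}$ of either Euclidean volume; the paper instead derives Theorem \ref{thm:relvol-1} by dividing the two $\sqrt{n}$'s from Lemmas \ref{lemma:parallelepiped-vol} and \ref{lemma:simplex-evol}. Your bookkeeping checks out: the vanishing of the last $k$ entries of $v_k$ together with the zero total sum kills the partial sums $c_{n-k},\dots,c_{n-1}$, and $c_{n-k-1} = -(-n+k+1) = n-k-1$ gives the pivots $n-1, n-2, \dots, 1$, whence $|\det C| = (n-1)!$.
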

\begin{proof}
    Let $\vw_i = s^i(\bpi) - \ve$ for all $s^i(\bpi) \in \mathcal{S}^{\bpi}$, and let $\mathcal{Q}_i$ be the set $\{\vw_{n - 1}, \vw_{n - 2}, ..., \vw_{n - i}\}$.
    That is, $\mathcal{Q}_i$ is the set of the last $i$ $\vw$-vectors.
    Let $\triangle_i$ be the convex hull of $\mathcal{Q}_i$. 
    We show $\EVol(\triangle_i) = \sqrt{i}$ for $i \geq 2$ by induction.
    
    For our base case, $i = 2.$
    We have $s^{n - 1}(\bpi) = (1, 2, \dots, n)$, so $w_{n - 1} = (0, 0, \dots, 0)$.
    By Lemma \ref{lemma:end-zeros}, we have $\vw_{n - 2} = (1, -1, 0, 0, ..., 0)$.
    So, \[\EVol(\triangle_2) = ||\vw_{n - 2} - \vw_{n - 1}|| = \sqrt{2}.\]

    For our inductive hypothesis, assume $\EVol(\triangle_i) = \sqrt{i}$. 
    We want to show $\EVol(\triangle_{i + 1}) = \sqrt{i + 1}$.

    By Lemma \ref{lemma:end-zeros}, the vectors in $\mathcal{Q}_i$ have $x_{i + 1} = x_{i + 2} = \dots = x_n = 0$.
    So, these vectors lie on the hyperplane $x_1 + x_2 + \dots + x_i = 0$, with $x_{i + 1} = \cdots = x_n = 0.$
    By Theorem \ref{thm:all-simplices}, $\triangle_i$ is a simplex, so the $i$ vectors in $\mathcal{Q}_i$ span this $(i - 1)$-dimensional hyperplane. 
    Hence, \[\EVol(\triangle_{i + 1}) = \frac{\EVol(\triangle_i)h}{i},\] where $h$ is the height of the altitude from $\triangle_i$ to $\vw_{n - (i + 1)}$.
    We show that $h = \sqrt{i(i + 1)}$.

    By Lemma $\ref{lemma:end-zeros}$, $\vw_{n - (i + 1)}$ has $x_{i + 1} = -i$.
    So, the distance from $\vw_{n - (i + 1)}$ to the hyperplane $x_{i + 1} = 0$ is $i$.
    Call the projection of $\vw_{n - (i + 1)}$ onto this hyperplane $\vv$.
    
    Now, we compute the distance from $\vv$ to the hyperplane $x_1 + x_2 + \dots + x_i = 0$.
    This hyperplane has unit normal vector $\vn = \frac{1}{\sqrt{i}}(1, 1, \dots, 1, 0, 0, \dots, 0)$ (with $i$ ones and $n - i$ zeros).
    So, the distance from $\vv$ to this hyperplane is $\vn \cdot \vv$.
    Note that $(\sqrt{i} \cdot \vn) \cdot \vv$ equals the sum of the first $i$ entries in $\vv$.
    
    We know the point $\vw_{n - (i + 1)}$ lies on the hyperplane \[x_{i + 1} = -(x_1 + x_2 + \dots + x_i).\]
    Also, by Lemma \ref{lemma:end-zeros}, $\vw_{n - (i + 1)}$ has $x_{i + 1} = -i$.
    So, the sum of the first $i$ entries of $\vv$ is $i$.
    Therefore, the distance from $\vv$ to the hyperplane $x_1 + x_2 + \dots + x_i = 0$ is $\sqrt{i}$.
    
    These two distances are perpendicular.
    So, we compute $h$:
    \[h = \sqrt{\sqrt{i}^2 + i^2} = \sqrt{i(i + 1)}.\]
    
    \noindent Therefore, \[\EVol(\triangle_{i + 1}) = \frac{\EVol(\triangle_i)h}{i} = \frac{\sqrt{i}\sqrt{i(i + 1)}}{i} = \sqrt{i + 1}.\]
    
    \noindent So, $\EVol(\triangle) = \EVol(\triangle_n) = \sqrt{n}$.
\end{proof}

We illustrate Lemma \ref{lemma:simplex-evol} with an example.

\begin{example} \label{ex:231-volume}
    Let $\bpi = 231$.
    We have $\mathcal{S}^{\bpi} = \{(2, 3, 1), (2, 1, 3), (1, 2, 3)\}$.
    So, \[\vw_0 = (1, 1, -2), \vw_1 = (1, -1, 0), \text{and } \vw_2 = (0, 0, 0).\]
    
    We have $\mathcal{Q}_2 = \{(0, 0, 0), (1, -1, 0)\}$, so $\triangle_2$ is a polytope with Euclidean volume $\sqrt{2}$.

    We also have $\mathcal{Q}_3 = \{(0, 0, 0), (1, -1, 0), (1, 1, -2)\}$.
    The point $(1, 1, -2)$ lies a distance of $2$ from the plane $x_3 = 0$. 
    The projection $(1, 1, 0)$ has a distance of $\left(\frac{1}{\sqrt{2}}(1, 1,0)\right) \cdot (1, 1,0) = \sqrt{2}$ from the line $x_1 + x_2 = 0$.
    So, the point $(1, 1, -2)$ lies a distance of $\sqrt{6}$ from conv$(\mathcal{Q}_2)$, and the volume of $\triangle_3$ is \[\frac{\sqrt{2}\sqrt{6}}{2} = \sqrt{3}.\]
    See Figure \ref{fig:euclidean_example} for a visualization of $\triangle_2$ and $\triangle_3$.
\end{example}

\begin{figure}[!ht]\label{fig:euclidean_example}

\begin{tikzpicture}
    [scale=1.2,
    vertex/.style={inner sep=1pt,circle,draw=andrespink,fill=andrespink,thick}]

\coordinate (A) at (1,1,-2); 
\node[anchor=west] at (1,1,-2) {\color{blue}$11(-2)$};
\coordinate (B) at (1,-1,0); 
\node[anchor=north] at (1,-1, 0) {\color{blue}$1(-1)0$};
\coordinate (C) at (0,0,0); 
\node[anchor=east] at (0,0,0) {\color{blue}$000$};

\coordinate (D) at  (-1,-1,0); 
\node[anchor=north] at (-1,-1,0) {\color{blue}$1(-1)0$};
\coordinate (E) at (-2,0,0); 
\node[anchor=east] at (-2,0,0) {\color{blue}$000$};

\node[anchor=east] at (-1.5,-.6,0) {$\sqrt{2}$};
\node[anchor=east] at (.5,-.6,0) {$\sqrt{2}$};
\node[anchor=east] at (.5,.6,-1) {$\sqrt{6}$};

\draw[black,top color=andresblue, bottom color=andresblue, fill opacity=0.3]  (C) -- (A) -- (B) -- cycle;
\draw[black,top color=andresblue, bottom color=andresblue, fill opacity=0.3] (D) -- (E);

\node[vertex] at (1,1,-2) {};
\node[vertex] at (1,-1,0) {};
\node[vertex] at (0,0,0) {};
\node[vertex] at (-2,0,0) {};
\node[vertex] at (-1,-1,0) {};

\end{tikzpicture}

\caption{$\triangle_2$ and $\triangle_3$ for $\bpi = 231$, as described in Example \ref{ex:231-volume}. The length of $\triangle_2$ is $\sqrt{2}$ and the area of $\triangle_3$ is $\sqrt{3}$.}
\end{figure}

\begin{theorem}\label{thm:relvol-1}
    Let $\bpi \in \mathcal{L}^n$.
    The relative volume of $\triangle = \conv(\mathcal{S}^{\bpi})$ is 1.
\end{theorem}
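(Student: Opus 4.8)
The plan is to obtain Theorem~\ref{thm:relvol-1} as the synthesis of the two preceding lemmas via the definition of relative volume. Recall from Definition~\ref{def:vol} and the discussion following it that $\vol(\triangle)$ is computed as the Euclidean volume of $\triangle$ divided by the Euclidean volume of a fundamental parallelepiped of the ambient lattice. Before dividing, I would first confirm that the normalization is taken against the correct lattice: since $\bpi \in \mathcal{L}^n$, Corollary~\ref{cor:Ln1-(n-1)-stack-sortable} shows $\bpi$ is exactly $(n-1)$-stack-sortable, so by Theorem~\ref{thm:all-simplices} the set $\triangle$ is a full-dimensional $(n-1)$-simplex inside the hyperplane $H$. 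Thus both the simplex and the fundamental parallelepiped are measured against the single $(n-1)$-dimensional lattice $\mathbb{Z}^n \cap H$, whose basis $\mathcal{B}$ was identified in Lemma~\ref{lemma:lattice-basis}.

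With this in place, the computation is a direct ratio. First I would invoke Lemma~\ref{lemma:simplex-evol} to record $\EVol(\triangle) = \sqrt{n}$, and then Lemma~\ref{lemma:parallelepiped-vol}, which gives that the fundamental parallelepiped for $\mathrm{span}(\triangle) = H$ also has Euclidean volume $\sqrt{n}$. Dividing,
\[
\vol(\triangle) = \frac{\EVol(\triangle)}{\sqrt{n}} = \frac{\sqrt{n}}{\sqrt{n}} = 1,
\]
which is exactly the claim.

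The substance of the argument therefore lives in the two lemmas, and the step I expect to warrant the most care is not the final cancellation but the bookkeeping ensuring that the volume in Lemma~\ref{lemma:simplex-evol} and the parallelepiped in Lemma~\ref{lemma:parallelepiped-vol} are anchored to the same lattice $\mathbb{Z}^n \cap H$; once Lemma~\ref{lemma:lattice-basis} pins down $\mathcal{B}$, both are measured against it and the division is legitimate. As an alternative that sidesteps Euclidean volumes entirely, one could express the edge vectors $s^i(\bpi) - \ve$ of $\triangle$ in the basis $\mathcal{B}$: by Lemma~\ref{lemma:end-zeros} the vector $s^k(\bpi)-\ve$ has last nonzero coordinate $-(n-k-1)$ in position $n-k$, so its highest nonzero $\mathcal{B}$-coordinate occurs at index $n-k-1$ with value $n-k-1$. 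Ordering the edge vectors by $k$ then yields an anti-triangular $(n-1)\times(n-1)$ coordinate matrix with antidiagonal entries $n-1, n-2, \dots, 1$, hence determinant $\pm(n-1)!$; dividing by $(n-1)!$ recovers relative volume $1$ directly. I would present the volume-ratio argument as the main proof, since its ingredients are already fully established.
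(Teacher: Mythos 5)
Your main argument is exactly the paper's proof: it divides $\EVol(\triangle)=\sqrt{n}$ from Lemma~\ref{lemma:simplex-evol} by $\EVol$ of the fundamental parallelepiped $=\sqrt{n}$ from Lemma~\ref{lemma:parallelepiped-vol}, and your extra care about anchoring both to the lattice $\mathbb{Z}^n\cap H$ via Lemma~\ref{lemma:lattice-basis} is sound. Your alternative determinant computation in the basis $\mathcal{B}$ is also correct and would bypass the Euclidean-volume lemmas entirely, but since you present the ratio argument as the main proof, the approach is essentially the same as the paper's.
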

\begin{proof}
    We know vol$(\triangle) = \frac{\EVol(\triangle)}{\EVol(\parallelogram)}$, where $\parallelogram$ represents the fundamental parallelepiped for $\mathrm{span}(\mathcal{S}^{\bpi})$.
    From Lemmas \ref{lemma:parallelepiped-vol} and \ref{lemma:simplex-evol}, we have \[ \EVol(\parallelogram) = \EVol(\triangle) = \sqrt{n}.\]
    So, \[\vol(\triangle) = \frac{\sqrt{n}}{\sqrt{n}} = 1.\]
\end{proof}

\section{Lattice-Point Enumeration of Stack-Sorting Simplices} \label{sec:lattice}

In this section, we focus on the lattice points of stack-sorting simplices.

\subsection{Ehrhart function and series} \label{subsec:ehrhart-background} \text{}

We give a brief introduction to Ehrhart theory, the study of lattice-point enumeration of dilates of polytopes. 
The $t^{\text{th}}$ dilate $t\mathcal{P}$ of polytope $\mathcal{P}$ is the set $\{t\vx : \vx \in \mathcal{P}\}$.
The \defterm{lattice-point enumerator} $L(\mathcal{P},t)$ of $\mathcal{P}$ counts the number of lattice points in the $t^{\text{th}}$ dilate of $\mathcal{P}$:\[L(\mathcal{P};t): = |t\mathcal{P} \cap\Z^n|.\]
If $\mathcal{P}$ is a lattice polytope, this function is a polynomial in $t$, with degree equal to the dimension of $\mathcal{P}$.
We can use the Ehrhart polynomial of $\mathcal{P}$ to obtain the \defterm{Ehrhart series} $\Ehr(\mathcal{P};z)$:
   \[\Ehr(\mathcal{P};z) := 1 + \sum_{t \in \Z_{>0}} L(\mathcal{P};t) z^t = \frac{h^{*}(\mathcal{P};z)}{(1-z)^{\dim(P)+1}}.\]
The \defterm{$h^*$-polynomial}, $h^*(\mathcal{P};z) = 1 + h^*_1 z + \cdots + h^*_{\dim(\mathcal{P})} z^{\dim(\mathcal{P})}$, has nonnegative integer coefficients.
For more information on Ehrhart theory, see \cite{ccd}.

\subsection{Hollow stack-sorting simplices} \label{subsec:2Ln1-hollow} \text{}

In \cite{lee2025stacksortingsimplicesgeometrylatticepoint}, the authors show that permutations of form $234\cdots n1$ generate hollow stack-sorting simplices.
In this section, we show that any permutation of the form $2Ln1$ generates a hollow stack-sorting simplex.
A polytope $\mathcal{P}$ is \defterm{hollow} if $\mathcal{P}$ contains no interior points.
Note that a hollow polytope can still contain boundary points.

\begin{lemma} \label{lemma:2Ln1-begins-2}
Let $\bpi \in \sym{n}$ be of the form $2Ln1$, where $L$ is any permutation of $\{3, 4, ..., (n-1)\}$.
Then, every point $s^i(\bpi) \in \mathcal{S}^{\bpi}$, besides $\mathbf{e}$, begins with 2.
\end{lemma}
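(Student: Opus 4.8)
The plan is to prove the slightly more explicit statement that $s^k(\bpi)$ begins with $2$ for every $0 \le k \le n-2$. Together with the fact that $2Ln1 \in \mathcal{L}^n$ is exactly $(n-1)$-stack-sortable (Corollary \ref{cor:Ln1-(n-1)-stack-sortable}), so that $s^{n-1}(\bpi) = \ve$ and $\{s^0(\bpi), \dots, s^{n-2}(\bpi)\}$ are precisely the non-identity elements of $\mathcal{S}^{\bpi}$, this yields the lemma. The engine of the argument is the following one-pass observation: if $\sigma \in \sym{n}$ satisfies $\sigma_1 = 2$ and $\sigma_2 > 2$, then $s(\sigma)$ begins with $2$. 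This is immediate from Definition \ref{def:stack-sorting-alg}: the algorithm first pushes $2$ onto the empty stack, and when it reads $\sigma_2 > 2$ the while-loop must pop the top element $2$ before anything else happens, so $2$ is the first value written to the (previously empty) output, regardless of what follows.

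First I would pin down the location of the value $1$ at each stage using Lemma \ref{lemma:end-zeros}. Since $\bpi = 2Ln1 \in \mathcal{L}^n$, that lemma tells us $s^k(\bpi)$ ends in the block $(1, n-k+1, n-k+2, \dots, n)$, a suffix of length $k+1$; hence the value $1$ occupies position $n-k$ of $s^k(\bpi)$. In particular, for $0 \le k \le n-3$ we have $n-k \ge 3$, so the value $1$ is \emph{not} in position $2$ of $s^k(\bpi)$.

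With these two facts assembled, I would run the induction on $k$. The base case $k = 0$ holds because $\bpi = 2Ln1$ begins with $2$ by hypothesis. For the inductive step, assume $0 \le k \le n-3$ and that $s^k(\bpi)$ begins with $2$. Since the entries of a permutation are distinct, the second entry of $s^k(\bpi)$ is neither $2$ (already used in position $1$) nor $1$ (by the location computed above), hence is at least $3$. The one-pass observation then applies and gives that $s^{k+1}(\bpi)$ begins with $2$. This establishes the claim for all indices up to $n-2$.

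The point requiring care is the bookkeeping of the index ranges rather than any deep difficulty. Lemma \ref{lemma:end-zeros} only forces $1$ away from position $2$ while $k \le n-3$; at the final non-identity stage $k = n-2$ the value $1$ sits in position $n-(n-2)=2$, so $s^{n-2}(\bpi)$ has the form $21\cdots$ and its image $s^{n-1}(\bpi) = \ve$ begins with $1$. Thus the induction must stop after producing the step $k = n-3 \to n-2$, and I must not try to push it one step further (the one-pass observation would fail there, since the second entry equals $1$). Observing that $s^{n-2}(\bpi)$ nonetheless still begins with $2$ confirms that every non-identity iterate begins with $2$, as claimed.
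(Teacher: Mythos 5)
Your proposal is correct and follows essentially the same route as the paper's proof: both rest on the one-pass observation that a leading $2$ followed by something larger is popped first, combined with the fact (from Theorem 3.5 of \cite{lee2025stacksortingsimplicesgeometrylatticepoint}, which you access via Lemma \ref{lemma:end-zeros}) that the value $1$ sits in position $n-k$ of $s^k(\bpi)$ and hence stays out of the first two positions for $k \le n-3$. Your write-up is somewhat more careful about the index bookkeeping at the final step $k=n-2$, but the argument is the same.
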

\begin{proof}
    Consider the stack-sorting algorithm on $\bpi$.
    First, $2$ is pushed onto the stack.
    Then, if the next value in the permutation is not $1$, $2$ will be popped from the stack.

    Also, by Theorem 3.5 of \cite{lee2025stacksortingsimplicesgeometrylatticepoint}, we know that $s^k(\bpi)$ ends with $(n - k)1(n - k  + 1)(n - k + 2) \dots n$ for $k < n - 1$.
    So, for $k < n - 2$, the value $1$ is not one of the first two elements of $s^k(\bpi)$.
    So, for $k < n - 1$, $s^k(\bpi)$ begins with $2$.
\end{proof}

\begin{proposition} \label{prop:2Ln1-hollow}
Let $\bpi \in \sym{n}$ be of the form $2Ln1$, where $L$ is any permutation of $\{3, 4, ..., (n-1)\}$.
The simplex $\triangle := conv(\mathcal{S}^{\bpi})$ is hollow.
In particular, any non-vertex lattice point of $\triangle$ lies on the facet formed by the convex hull of $\mathcal{S}^{\bpi} \setminus \{\mathbf{e}\}$.
\end{proposition}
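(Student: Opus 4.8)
The plan is to read off everything from the first coordinate. By Corollary \ref{cor:Ln1-(n-1)-stack-sortable}, a $2Ln1$ permutation is exactly $(n-1)$-stack-sortable, so $\mathcal{S}^{\bpi}$ consists of $n$ points and, by Theorem \ref{thm:all-simplices}, $\triangle$ is an $(n-1)$-simplex whose vertices are exactly these $n$ points. By Lemma \ref{lemma:2Ln1-begins-2}, precisely one of these vertices, namely $\mathbf{e}$, has first coordinate $1$, while the remaining $n-1$ vertices, which I will call $\bov_1, \dots, \bov_{n-1}$, all have first coordinate $2$. This lopsidedness in the first coordinate is the whole engine of the argument.

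First I would write an arbitrary point $\bp \in \triangle$ in barycentric coordinates with respect to the vertices, $\bp = \lambda_0 \mathbf{e} + \sum_{i=1}^{n-1} \lambda_i \bov_i$, where $\lambda_j \geq 0$ and $\sum_{j} \lambda_j = 1$. Reading off the first coordinate gives $p_1 = \lambda_0 \cdot 1 + 2\sum_{i=1}^{n-1}\lambda_i = 2 - \lambda_0$, so that $p_1 \in [1,2]$ and $\lambda_0 = 2 - p_1$. The simplex structure guarantees these barycentric coordinates are unique, which is what lets me turn a constraint on $p_1$ into a constraint on the $\lambda_j$.

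Now I would specialize to lattice points and to interior points. If $\bp \in \triangle \cap \mathbb{Z}^n$, then $p_1$ is an integer in $[1,2]$, hence $p_1 \in \{1,2\}$. If $p_1 = 1$, then $\lambda_0 = 1$, which forces $\bp = \mathbf{e}$, a vertex. If $p_1 = 2$, then $\lambda_0 = 0$, so $\bp$ is a convex combination of $\bov_1, \dots, \bov_{n-1}$ alone, i.e. $\bp$ lies on the facet $\conv(\mathcal{S}^{\bpi} \setminus \{\mathbf{e}\})$; this establishes the ``in particular'' claim that every non-vertex lattice point sits on that facet. For hollowness, I would observe that any point in the relative interior of $\triangle$ has every $\lambda_j > 0$, so $\lambda_0 \in (0,1)$ and therefore $p_1 \in (1,2)$, an open interval containing no integer. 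Hence $\triangle$ has no interior lattice points and is hollow.

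The argument is genuinely short, so there is no serious obstacle; the only points demanding care are bookkeeping ones. I would make sure ``interior'' is read as \emph{relative} interior, since $\triangle$ is $(n-1)$-dimensional inside the hyperplane $H$ and so has empty interior as a subset of $\mathbb{R}^n$; and I would note explicitly that the $\{x_1 = 2\}$ slice of $\triangle$ is exactly the facet $\conv(\mathcal{S}^{\bpi}\setminus\{\mathbf{e}\})$, which is immediate once Lemma \ref{lemma:2Ln1-begins-2} tells us the $\bov_i$ are the only vertices with $x_1 = 2$. Everything else reduces to the elementary fact that $\mathbb{Z} \cap (1,2) = \varnothing$.
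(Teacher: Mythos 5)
Your proposal is correct and follows essentially the same route as the paper: write a point of $\triangle$ in barycentric coordinates, use Lemma \ref{lemma:2Ln1-begins-2} to compute the first coordinate as $2-\lambda_0$, and conclude that integrality forces $\lambda_0\in\{0,1\}$. The paper phrases this as a contradiction while you argue directly (and you helpfully flag that ``interior'' means relative interior), but the key idea is identical.
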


\begin{proof}
    Call the vertices of $\triangle$ $\textbf{v}_1, \textbf{v}_2, ..., \textbf{v}_n$, with $\textbf{v}_n = \mathbf{e}$.
    Then, by the vertex description of $\triangle$, we have
    $$\triangle = \{\lambda_1 \textbf{v}_1 + \lambda_2 \textbf{v}_2 + \cdots + \lambda_n \textbf{v}_n : \text{all } \lambda_k \geq 0 \text{ and } \lambda_1 + \lambda_2 + \cdots + \lambda_n = 1\}.$$
    We want to show that there are no lattice points on $\triangle$ with $0 < \lambda_n < 1$.
        
    Suppose the contrary that there exists a lattice point $\bp \in \mathbb{R}^n$ on $\triangle$ with $0 < \lambda_n < 1$.
    Now, consider the first value in $\bp$, $\bp_1$.
    By Lemma \ref{lemma:2Ln1-begins-2}, the first value of $\textbf{v}_k$ is 2 for $1 \leq k < n$.
    Also, we know the first value of $\vv_n = \be$ is 1.
    So,
    \[\bp_1 = 2 \lambda_1 + 2 \lambda_2 + ... + 2 \lambda_{n - 1} + 1 \lambda_n = 2(1 - \lambda_n) + \lambda_n = 2 - \lambda_n.\]
    However, $0 < \lambda_n < 1$, so, $1 < 2 - \lambda_n < 2$.
    This tells us that $\bp$ cannot be a lattice point, giving us a contradiction.
    Therefore, there are no lattice points on $\triangle$ with $0 < \lambda_n < 1$.
\end{proof}

\begin{proposition} \label{prop:2Ln1-hstar}
Let $\bpi \in \sym{n}$ be of the form $2Ln1$, where $L$ is any permutation of $\{3, 4, ..., (n-1)\}$.
Let $\triangle_1 := conv(\mathcal{S}^{\bpi})$ and $\triangle_2 := conv(\mathcal{S}^{\bpi} \setminus \{\mathbf{e}\})$.
Then, $\triangle_1$ and $\triangle_2$ have the same $h$*-polynomial.
\end{proposition}

\begin{proof}
    Call the vertices of $\triangle_1$ $\textbf{v}_1, \textbf{v}_2, ..., \textbf{v}_n$, with $\textbf{v}_n = \be$.
    Then, the vertices of $\triangle_2$ are $\textbf{v}_1, \textbf{v}_2, ..., \textbf{v}_{n - 1}$.
        
    By Corollary 3.11 in \cite{ccd}, given a vertex set $\{\textbf{v}_1, \textbf{v}_2, ..., \textbf{v}_n\}$, $h$*$_k$ equals the number of integer points in 
    \begin{equation} \label{eqn:h-integer-points}
        \{\lambda_1 \textbf{v}_1 + \lambda_2 \textbf{v}_2 + ... + \lambda_n \textbf{v}_n : \text{all } 0 \leq \lambda_i < 1 \text{ and } \lambda_1 + \lambda_2 + ... + \lambda_n = k\}.
    \end{equation}

    We want to show that all integer points on $\triangle_1$ have $\lambda_n = 0$.
    This proof follows similarly to the proof of Proposition \ref{prop:2Ln1-hollow}.
    
    Suppose the contrary that there exists a lattice point $\bp \in \mathbb{R}^n$ that satisfies (\ref{eqn:h-integer-points}) with arbitrary $k$ and $0 < \lambda_n < 1$.
    Now, consider the first value in $\bp$, $\bp_1$.
    By Lemma \ref{lemma:2Ln1-begins-2}, the first value of $\textbf{v}_k$ is 2 for $1 \leq k < n$.
    Also, we know the first value of $\vv_n = \be$ is 1.
    So, \[\bp_1 = 2\lambda_1 + 2\lambda_2 + ... + 2\lambda_{n - 1} + 1\lambda_n = 2(k - \lambda_n) + \lambda_n = 2k - \lambda_n.\]
    However, $0 < \lambda_n < 1$, so $2k - 1 < 2k - \lambda_n < 2k$.
    So, $\bp$ cannot be an integral point, and we have a contradiction.
    
    Therefore, for arbitrary $k$, an integral point satisfies (\ref{eqn:h-integer-points}) for $\triangle_1$ if and only if that point satisfies (\ref{eqn:h-integer-points}) for $\triangle_2$.
    So, $\triangle_1$ and $\triangle_2$ have the same $h$*-polynomial.
\end{proof}

\begin{remark}
We can view $\triangle_1$ as a pyramid over $\triangle_2$.
Since $\ve$ lies a distance of $1$ from the hyperplane $x_1 = 2$, we can apply Theorem 2.4 in \cite{ccd}.
Proposition \ref{prop:2Ln1-hstar} follows as a corollary.
\end{remark}

\subsection{Counting integer points}

In this subsection, we give an upper bound for the number of integer points in a stack-sorting simplex $\mathcal{P}$ generated by an $Ln1$ permutation.
This number includes interior points, boundary points, and vertex points.

A \emph{triangulation} of a polytope $\mathcal{P}$ is a partition of $\mathcal{P}$ into simplices whose vertices are lattice points.
These simplices can overlap at a facet, but they cannot otherwise intersect.

\begin{proposition} \label{prop:int_point_max}
    Let $\bpi \in \mathfrak{S}_n$ be a permutation of the form $Ln1$, and let $\mathcal{P}$ be the stack-sorting simplex generated by $\bpi$.
    Then, $\mathcal{P}$ has a maximum of $(n - 1)! + (n - 1)$ lattice points.
\end{proposition}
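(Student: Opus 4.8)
The plan is to combine the relative-volume computation of Theorem~\ref{thm:relvol-1} with a lattice triangulation of $\mathcal{P}$. Since $\bpi$ is an $Ln1$ permutation, Corollary~\ref{cor:Ln1-(n-1)-stack-sortable} gives that $\bpi$ is exactly $(n-1)$-stack-sortable, so by Theorem~\ref{thm:all-simplices} the polytope $\mathcal{P}=\conv(\mathcal{S}^{\bpi})$ is an $(n-1)$-simplex. By Theorem~\ref{thm:relvol-1} its relative volume equals $1$, so its normalized volume, which is $(n-1)!$ times the relative volume, equals $(n-1)!$. The numerical coincidence driving the bound is that $(n-1)!$ maximal simplices, each contributing at most one new lattice point after the first, yield exactly $n+((n-1)!-1)=(n-1)!+(n-1)$ points.

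First I would fix a triangulation $T$ of $\mathcal{P}$ that uses every lattice point of $\mathcal{P}$ as a vertex (a \emph{fine} triangulation, which always exists, e.g.\ via a pulling construction). Every lattice point of $\mathcal{P}$ is then a vertex of some simplex of $T$, so counting lattice points amounts to counting the distinct vertices appearing in $T$. Because each maximal simplex of $T$ is a full-dimensional lattice simplex, it has integer normalized volume at least $1$, and these volumes sum to $(n-1)!$; hence $T$ has at most $N\le (n-1)!$ maximal simplices. Now order the maximal simplices $\sigma_1,\dots,\sigma_N$ so that each $\sigma_j$ (for $j\ge 2$) shares an $(n-2)$-dimensional facet with $\sigma_1\cup\cdots\cup\sigma_{j-1}$; such an ordering exists because the dual graph of a triangulation of a polytope is connected, so a spanning tree of it furnishes the desired gluing order. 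The simplex $\sigma_1$ has $n$ vertices, and each subsequent $\sigma_j$ already has $n-1$ of its $n$ vertices lying in the shared facet, hence introduces at most one new lattice point. Therefore the number of lattice points satisfies
\[
|\mathcal{P}\cap\Z^n| \;\le\; n + (N-1)\cdot 1 \;\le\; n + (n-1)! - 1 \;=\; (n-1)! + (n-1).
\]

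The main obstacle is making the two geometric inputs precise: that a fine triangulation (with all lattice points as vertices) exists, and that its maximal simplices can be ordered so each is glued along a facet to the union of its predecessors. Together these guarantee both that no lattice point is missed and that the ``at most one new point per simplex'' accounting is valid. Alternatively, I could bypass triangulations using the $h^*$-polynomial: for any lattice $(n-1)$-simplex one has $h^*_0=1$, the sum $\sum_i h^*_i$ equals the normalized volume, and the number of lattice points is $L(\mathcal{P};1)=n+h^*_1$. Since the normalized volume is $(n-1)!$ and all $h^*_i\ge 0$, we get $h^*_1=(n-1)!-1-\sum_{i\ge 2}h^*_i\le (n-1)!-1$, whence $L(\mathcal{P};1)\le (n-1)!+(n-1)$, recovering the same bound and showing it is attained precisely when $h^*_i=0$ for all $i\ge 2$.
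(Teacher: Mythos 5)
Your proposal is correct, and it actually contains two arguments. The first (fine triangulation plus a facet-gluing order) is essentially the paper's proof run in the opposite direction: the paper starts from the trivial triangulation by the $n$ vertices and inserts non-vertex lattice points one at a time, arguing each insertion adds at least one simplex to a triangulation that can have at most $(n-1)!$ simplices by Theorem~\ref{thm:relvol-1}; you instead fix a fine triangulation with at most $(n-1)!$ maximal simplices and observe that each simplex after the first contributes at most one new vertex. The counting is the same; your version has the small advantage of making explicit the standard facts the paper leaves implicit (existence of a fine triangulation, e.g.\ by pulling, and connectivity of the dual graph), while the paper's insertion argument avoids needing the dual-graph ordering at all. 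Your second, $h^*$-based argument is a genuinely different route not taken in the paper: since $L(\mathcal{P};1)=n+h^*_1$ for an $(n-1)$-simplex, $h^*_0=1$, $\sum_i h^*_i$ equals the normalized volume $(n-1)!$, and all $h^*_i\geq 0$ by Stanley nonnegativity, the bound falls out immediately. This is cleaner, bypasses all triangulation technicalities, and gives strictly more information, namely that equality holds exactly when $h^*_i=0$ for $i\geq 2$; it fits naturally with the Ehrhart machinery the paper already introduces in Section~\ref{subsec:ehrhart-background}.
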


\begin{proof}
    Our polytope $\mathcal{P}$ is an $(n - 1)$-polytope, and the smallest $(n - 1)$-simplices have relative volume $\frac{1}{(n - 1)!}$.
    Also, by Theorem \ref{thm:relvol-1}, $\mathcal{P}$ has relative volume $1$.
    So, any triangulation of $\mathcal{P}$ would have at most $(n - 1)!$ simplices.

    Now, for the sake of contradiction, assume $\mathcal{P}$ has at least $(n - 1)! + n$ lattice points.
    Exactly $n$ of these lattice points will be vertex points.
    We first triangulate just this vertex set.
    Then, we add the lattice points to our triangulation, one at a time.

    Each new lattice point would lie in at least one simplex in our triangulation.
    Then, we can triangulate this simplex into two or more simplices using this lattice point.
    So, each new lattice point would add at least 1 simplex to our triangulation of $\mathcal{P}$.

    So, with $(n - 1)!$ non-vertex lattice points, our polytope $\mathcal{P}$ could be triangulated using at least $(n - 1)! + 1$ simplices.
    However, we have already shown that any triangulation of $\mathcal{P}$ must have at most $(n - 1)!$ simplices.

    So, we have a contradiction, and $\mathcal{P}$ must have at most $(n - 1)! + (n - 1)$ lattice points.
\end{proof}

For small values of $n$, there are stack-sorting simplices with close to $(n - 1)! + (n - 1)$ lattice points, but as $n$ grows, this approximation becomes very weak.
A stronger upper bound may lie closer to $2^{n - 1}$.
The permutation $234586791$ generates the smallest stack-sorting simplex with greater than $2^{n - 1}$ lattice points.

\begin{remark}
Readers may conjecture that all stack-sorting $d$-polytopes have a unimodular triangulation (i.e., a triangulation where all simplices have relative volume $\frac{1}{d!}$).
Unfortunately, the permutation $34251$ generates a stack-sorting simplex without a unimodular triangulation.
So, we cannot generate a lower bound for the number of lattice points in a polytope $\mathcal{P}$ in the same way we give an upper bound.
\end{remark}

\section{Questions, Conjectures, and Future Research} \label{sec:questions}
When studying and discussing stack-sorting polytopes, we encountered many ideas that may be useful areas for further exploration.
We pose the following questions to serve as directions for future research. 

\begin{enumerate}
    \item What patterns hold regarding the relative volume of stack-sorting polytopes?

    \item For $\bpi = 12543$, $\vol(\conv(\mathcal{S}^{\bpi})) = 2$.
    Which choices of $\bpi\in \mathfrak{S}_n$ form stack-sorting simplices with relative volume greater than 1? What is the maximum volume of such a polytope?

    \item What is the maximum Euclidean volume of a stack-sorting polytope? We conjecture that this value is $\sqrt{n}$ for permutations $\bpi \in \mathfrak{S}_n$, the volume of a polytope generated by an $Ln1$ permutation.

    \item Some permutations, like $23451$, generate stack-sorting polytopes with unimodular triangulations, whereas some, like $34251$, do not.
    Which stack-sorting polytopes have unimodular triangulations?


    \item What is the maximum number of lattice points in a stack-sorting simplex?

    \item For $n \le 8,$ the permutation of the form $234 \dots n1$ generates a stack-sorting simplex containing the maximum lattice points for that value of $n$. This does not hold for $n > 8$. Is there a pattern regarding which stack-sorting simplex for a given $n$ has the highest lattice point count?

    \item Does there exist a combinatorial interpretation for the $h^{*}$-coefficients of stack-sorting simplices?
\end{enumerate}

\section*{Acknowledgments}
We would like to thank the Mathematics Department at Harvey Mudd College for providing such a supportive environment to study math. 
We are grateful to Dagan Karp for feedback on a presentation of our results.
We thank DruAnn Thomas and Michael Orrison for their support throughout the research process.
We would finally like to thank our other research group members, Tito Augusto Cuchilla, Joseph Hound, Cole Plepel, and Louis Ye, for their valuable feedback on our work.


\bibliographystyle{amsplain}
\bibliography{references-new}

\end{document}